\title{Orlicz-space Hardy and Landau-Kolmogorov\\ inequalities
for Gaussian measures}
\author{Krzysztof Oleszkiewicz\thanks{Supported in part by MNiSW grant
N N201 397437.}\,\,\,\,and Katarzyna Pietruska-Pa\l{}uba\thanks{Supported in part
  by  KBN grant no.
                         1-PO3A-008-29. }\\
                         {\small(Warsaw)}}
\newtheorem{theo}{\bf Theorem}[section]
\newtheorem{cor}{\bf Corollary}[section]
\newtheorem{lem}{\bf Lemma}[section]
\newtheorem{rem}{\bf Remark}[section]
\newtheorem{prop}{\bf Proposition}[section]
\makeatletter \@addtoreset{equation}{section}
\newcommand{\ds}{\displaystyle}
\newcommand{\ts}{\textstyle}
\newcommand{\barint}{
         \rule[.036in]{.12in}{.009in}\kern-.16in
          \displaystyle\int  }
\def\rn{{\mathbb{R}^{n}}}
\def\M{{\cal M}}
\def\zi{[0,\infty )}
\def\C{{{C}}}
\def\K{{{\cal{K}}}}
\def\L{{{\cal{L}}}}
\def\M{{{\cal{M}}}}
\def\R{{\mathbb R}}
\def\eps{{\varepsilon}}
\def\1{{\mathbf 1}}
\def\ud{{\mathrm d}}
\def\e{{\mathrm e}}
\begin{document}

\date{} \maketitle

\begin{abstract}
We prove Orlicz-space versions of Hardy and Landau-Kolmogorov
inequalities for Gaussian measures on $\rn.$
\end{abstract}

\section{Introduction}

The classical Hardy inequality on $\rn$ states that for $u\in
W^{1,2}(\rn)$
\begin{equation}
\label{hardyclas} \int_\rn \frac{u^2(x)}{|x|^2}\,\ud x \leq
\left(\frac{2}{N-2}\right)^2\int_{\rn}|\nabla u|^2\,\ud x,
\end{equation}
which can be written as
\[\left\|u(\cdot)\frac{1}{|\cdot|}\right\|_{2}\leq\frac{2}{N-2}\|\nabla
u\|_2.\]

It is a natural question to ask for its generalisations: the
`measure' $\frac{1}{|x|^2}\,\ud x$ on the left hand side of
(\ref{hardyclas}) can be replaced by $\ud\mu,$ second norm by $p-$th
or $q-$th, the measure $\ud x$ on the right hand side by $\ud\nu.$

For $n=1$ and functions $u$ vanishing on the boundary, the  Hardy
inequality (for general measures on $[a,\infty)$) in $L^p$ norms
has been thoroughly studied and there is a complete
description of measures that allow for such an
inequality. We have the following characterization, which can be found in
(\cite{ma}, Section 1.3.1, Th. 1):

\begin{theo}[\cite{ma}] \label{mazzz}
Suppose that $\mu,$ $\nu$ are
nonnegative measures on $(a,\infty),$ let $\nu^*$ be the
absolutely continuous part of $\nu.$ Then  the inequality
\begin{equation}\label{mazhar}
\left(\int_a^\infty \left|\int_a^x f(t)\,\ud t\right|^q
\,\ud\mu(x)\right)^{\frac{1}{q}}\leq
 C \left(\int_a^\infty |f(x)|^p\,\ud\nu(x)\right)^{\frac{1}{p}},
 \end{equation}
 where $1\leq p\leq q \leq \infty,$ holds for all Borel measurable
 functions $f$ if and only if
 \begin{equation}\label{mazjacond}
B=\sup_{r>a}\,[\mu([r,\infty))]^{\frac{1}{q}}\left(\int_a^r
\left(\frac{\ud\nu^*}{\ud x}\right)^{\frac{-1}{p-1}}\right)^{\frac{p-1}{p}}<\infty.
 \end{equation}
\end{theo}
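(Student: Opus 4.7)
The plan is to establish necessity and sufficiency of (1.4) separately; the conclusion is classical, due to Muckenhoupt for $p=q$ and to Bradley/Maz'ya in the general range $1 \leq p \leq q \leq \infty$, and both implications admit short direct arguments. Throughout, set $w := \ud\nu^*/\ud x$.

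For necessity I would test (1.3) on the family $f_r(t) := w(t)^{-1/(p-1)}\chi_{(a,r)}(t)$ parameterised by $r > a$. Since the singular part of $\nu$ does not enter, the right-hand side of (1.3) evaluates to $C \cdot \bigl(\int_a^r w^{-1/(p-1)}\,\ud t\bigr)^{1/p}$. For every $x \geq r$ the inner integral on the left is the constant $I_r := \int_a^r w^{-1/(p-1)}\,\ud t$, so restricting the outer integration to $[r,\infty)$ yields a lower bound of $\mu([r,\infty))^{1/q}\, I_r$. Cancelling $I_r^{1/p}$ gives
\[
\mu([r,\infty))^{1/q}\, I_r^{(p-1)/p} \leq C,
\]
which is precisely $B \leq C$. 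The degenerate cases $I_r = \infty$ and $p = 1$ require only standard truncation and a reinterpretation of the exponent $-1/(p-1)$ as an essential supremum, respectively.

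For sufficiency, assume $B < \infty$ and, splitting into positive and negative parts, take $f \geq 0$. The workhorse is H\"older's inequality applied pointwise to the inner integral with the weight suggested by the form of $B$:
\[
\int_a^x f(t)\,\ud t \;\leq\; \Bigl(\int_a^x f(t)^p\, w(t)\,\ud t\Bigr)^{1/p}\Bigl(\int_a^x w(t)^{-1/(p-1)}\,\ud t\Bigr)^{(p-1)/p}.
\]
The first factor is dominated by $\|f\|_{L^p(\nu)}$, since enlarging $\nu^*$ to $\nu$ only helps. When $p = q$, one raises both sides to the $p$-th power, integrates against $\ud\mu$, and applies Fubini to switch the order of integration; the definition of $B$ then yields a constant of order $B^p$. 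For $p < q$ a naive application of H\"older loses too much, and instead I would follow Bradley's dyadic device: choose $x_k$ with $\int_a^{x_k} f\,\ud t = 2^k$, control the contribution of each interval $(x_k, x_{k+1})$ via the H\"older estimate above, and sum using $p \leq q$. This dyadic balancing is the main technical obstacle; the remaining boundary cases ($p=1$, $q=\infty$, or $w$ vanishing on a set of positive Lebesgue measure) are routine and do not affect the structure of the argument.
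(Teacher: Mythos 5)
The paper does not prove Theorem \ref{mazzz} at all: it is quoted from \cite{ma} (Section 1.3.1, Th.~1) and used as a black box, so there is no in-paper argument to compare yours against; I can only judge the sketch on its own terms. Your necessity argument is the standard one and is essentially fine, except that the singular part of $\nu$ \emph{does} enter as written: with $f_r=w^{-1/(p-1)}\chi_{(a,r)}$ the right-hand side of (\ref{mazhar}) is $C\bigl(I_r+\int_a^r w^{-p/(p-1)}\,\ud\nu_{\mathrm{sing}}\bigr)^{1/p}$, which may exceed $CI_r^{1/p}$ — the wrong direction for a necessity proof. The standard one-line fix is to redefine $f_r$ to vanish on the Lebesgue-null set carrying $\nu_{\mathrm{sing}}$, which leaves the inner Lebesgue integral unchanged while killing the singular contribution; this should be said explicitly.

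The genuine gap is in the sufficiency argument for $p=q$. After your unweighted H\"older step, raising to the $p$-th power and applying Fubini reduces the claim to the bound
\[
\sup_{t>a}\int_t^\infty\Bigl(\int_a^x w^{-1/(p-1)}\,\ud s\Bigr)^{p-1}\ud\mu(x)\;\leq\;C\,B^{p},
\]
and this does \emph{not} follow from $B<\infty$. Concretely, take $a=0$, $w\equiv 1$, $p=q=2$, $\ud\mu=x^{-2}\chi_{[1,\infty)}\,\ud x$: then $B=1$, but $\int_1^\infty x\cdot x^{-2}\,\ud x=\infty$, so the Fubini step yields nothing. Muckenhoupt's actual proof inserts the factor $h(t)=\bigl(\int_a^t w^{-1/(p-1)}\bigr)^{1/(pp')}$ and its reciprocal into the two H\"older factors precisely so that, after Fubini, the exponent on $\int_a^x w^{-1/(p-1)}$ drops below $p-1$ and the tail $\mu$-integral becomes controllable by $B$; alternatively, the level-set (dyadic) decomposition you reserve for $p<q$ also handles $p=q$. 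As written, your $p=q$ case does not close, and the $p<q$ case is a pointer to Bradley's device rather than an argument, so the sufficiency half needs to be reworked around one of these two mechanisms.
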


We are concerned with generalisations of (\ref{hardyclas}), when
the Lebesgue measure is replaced by the standard Gaussian measure on
$\rn,$ $\gamma_n(\ud x)=
(2\pi)^{-n/2}\exp(-|x|^{2}/2)\,\ud x,$  the `inner'
weight $w(x)=|x|^{-1}$ is replaced by $w(x)=|x|,$ and the
$L^p-$norms are replaced by Orlicz norms or Orlicz modular
expressions. Inequalities for the Gaussian measure  on $\rn$ can be reduced
to inequalities on $\zi,$ with respect to the measure ${\rm d}\mu_n(r)= r^{n-1}e^{-r^2/2}\ud r.$
Applying Theorem \ref{mazzz} with $\ud \mu(r)= r^q \ud\mu_n(r)$ and $\ud\nu(r)=\ud\mu_n(r),$
 we see that in that case inequality (\ref{mazhar}) with $p=q$
 (for Hardy transforms) can hold
only if $p>n.$

Another reason why inequalities (\ref{mazhar}) need to be extended
is that we need
inequalities for measures  $\mu_n$  holding true
 not only for Hardy transforms,
but also for functions not necessarily vanishing at zero.

More precisely, in this paper we aim at obtaining inequalities of the form:
\begin{equation}\label{what0}
\int_0^\infty M(|rv(r)|)\,\ud\mu_n(r) \leq C_1 \int_0^\infty
M(|v(r)|)\,\ud\mu_n(r)+C_2 \int_0^\infty M(|\nabla v(r)|)\,\ud\mu_n(r),
\end{equation}
which then give rise to
\begin{equation}\label{what}
\int_\rn M(|xv(x)|)\,\ud\gamma_n(x) \leq C_1 \int_\rn
M(|v(x)|)\,\ud\gamma_n(x)+C_2 \int_\rn M(|\nabla v(x)|)\,\ud\gamma_n(x),
\end{equation}
with $C_1, C_2$ independent of $v$ from a sufficiently large class
of functions, but  depending on the dimension $n.$ This
dependence cannot be suppressed,  see the discussion at the end of Section \ref{hardygauss}.
We still  call  inequality  (\ref{what}) the Hardy
inequality for the Gaussian measure. The Hardy inequality for Gaussian measures are of separate interest,
both in the probability theory and the PDE theory. For other
inequalities for the Gaussian measure (Poincar\'e, log-Sobolev) the
reader can consult e.g. \cite{led}, while in \cite{bla,bfp, bct}
one can find the results concerning the  importance of Gaussian
measures in the PDE theory.

We obtain (\ref{what0}) and (\ref{what}) for general $N-$functions $M$ satisfying the $\Delta_2-$condition (doubling), see Proposition
\ref{wklep1}.
With some additional condition (close to the property that $M(r)/r^2$ is non-decreasing) we were able
to provide a more detailed analysis of the resulting constants.

Inequalities (\ref{what})
are an example of the so-called $U-$bounds (see \cite{Heb-Zeg}),
i.e. inequalities of the form
\[ \int |v|^q  U \ud\mu \leq C\int |\nabla f|^q \ud\mu + D\int |f|^q \,\ud \mu \]
analyzed in the context of general metric spaces and metric gradients.
Examples furnished in that paper indicate that
 the most interesting $U-$bounds for a measure $\ud \mu(x)={\rm e}^{-\varphi(x)}$
 are those with $U(x)=|\nabla \varphi|.$ Such inequalities can be related e.g. to Poincar\'{e}, log-Sobolev
 and other
 inequalities for $\mu.$
 Since for the Gaussian measure one has $\varphi (x)=|x|^2/2,$ and $|\nabla \varphi (x)|=|x|,$
the  weight $w(x)=|x|$ in (\ref{what}) is the most desirable one.
 In a somewhat different context, such inequalities were also investigated in \cite{bct}.

As an application we show, using a general
theorem from \cite{AKKPPbullpan},
 that   inequality  (\ref{what})  implies the Orlicz version of the Landau-Kolmogorov
inequality for the Gaussian measure:
\begin{equation}\label{star}
\|\nabla u\|_{L^M(\rn,\gamma_n)}\leq C_1
\|u\|_{L^{M}(\rn,\gamma_n)}+C_2\|\nabla^{(2)}u\|_{L^M(\rn,\gamma_n)},
\end{equation}
together with its modular counterpart.

In \cite{AKKPPBBMS}, one proves additive Gagliardo-Nirenberg
inequalities in weighted Orlicz spaces. In particular,    the
following  inequality for Gaussian measures was obtained:
\[
\|\nabla u\|_{L^M(\rn,\gamma_n)}\leq C_1
\|u\|_{L^{\Phi_1}(\rn,\gamma_n)}+C_2\|\nabla^{(2)}u\|_{L^{\Phi_2}(\rn,\gamma_n)},
\]
where $M$ was an $N-$function satisfying the
$\Delta_2-$condition and increasing faster that $r^2$, and
$\Phi_1,\Phi_2$ were other $N-$functions. The functions $M,
\Phi_1, \Phi_2$ were tied by certain consistency conditions, which
in particular {\em excluded} the case $M=\Phi_1=\Phi_2$, i.e. the
results of \cite{AKKPPBBMS} did not yield the Landau-Kolmogorov
inequality (\ref{star}) in Orlicz norms.
This is rectified in present paper, see Corollary \ref{forgausssian}.

 \section{Preliminaries}
\subsection{Notation}

 Throughout the paper, the symbol
$\nabla^{(2)}u$ denotes the Hessian of a function $u\in C^2(\rn),$
i.e. the matrix $[\frac{\partial^2u}{\partial x_i\partial
x_j}]_{i,j=1}^n.$ For a square $n\times n$ matrix $A$, by $|A|$
 we denote its Hilbert-Schmidt
norm:
\[|A|=|A|_{HS}=\left(\sum_{i,j=1}^n a_{ij}^2\right)^{\frac{1}{2}};\]
 $C_0^\infty(\rn )$ stands for
smooth compactly supported functions
 on $\rn.$

\subsection{$N-$functions}

A function $M\!:\![0,\infty )\to [0,\infty )$ is called an
{$N-$function} if it is convex, $M(0)=0,$ $\ds\lim_{r\to
0^+}{M(r)}/{r} =0$ and $\ds\lim_{r\to\infty}{M(r)}/{r}  =\infty.$
 An $N-$function $M$ is said to
satisfy the $\Delta_2-$condition if and only if
\begin{equation}\label{d3}
 \exists\, C_{M}>1 \; \forall\, r>0 \;\;\;  M(2r)\leq C_M  M(r).
\end{equation}

If additionally $M$ is differentiable, then the
$\Delta_2-$condition (\ref{d3}) is equivalent to the existence of
$D_M>1$ s.t.  $M'(r)\leq D_M \frac{M(r)}{r},$  for $r>0.$
Additional conditions on $M$ will be added as needed.

\subsection {Weighted Orlicz spaces}

 Suppose that $\mu$ is a positive Radon
measure on $\rn$
 and let $M:[0,\infty)\to[0,\infty)$ be an $N-$function.
 The weighted space $L^{M}(\mu)$ with respect  to the measure $\mu$ is, by definition, the function space
\[L^M(\rn,\mu)=L^{M}(\mu)\stackrel{def}{=}\left\{f\mbox{ measurable} \colon   \int_{\rn}
 M\left(\frac{|f(x)|}{K}\right)\,\ud\mu(x)\leq 1\ \mbox{ for some }\ K >0\right\} ,  \]
 equipped with the Luxemburg norm
\[\|f\|_{L^M(\mu)}=\inf\left\{ K>0 \colon \int_{\rn}M\left(\frac{|f(x)|}{K}\right)\,\ud\mu(x)\leq
1\right\} .\] This norm is complete and turns $L^{M}(\mu)$ into a Banach
space. For $M(r)=r^p$ with $p> 1$,  the space $L^{M}(\mu)$
coincides with the usual $L^p(\mu)$ space.

 We recall the following two properties of Young
functionals: for every $f\in L^{M}(\mu)$ we have
 \begin{equation}\label{norm}
 \|f\|_{L^M(\mu)}\leq \int_{\rn}M(|f(x)|)\,\ud\mu(x)+1,
 \end{equation}
and
 \begin{equation}\label{norm1}
 \int_{\rn} M\left(\ts\frac{|f(x)|}{\|f\|_{L^M(\mu)}}\right)\,\ud\mu(x)\leq
 1.
 \end{equation}
 When $M$ satisfies the $\Delta_2-$condition, then
 (\ref{norm1}) becomes an equality.\\

For more information on Orlicz spaces the reader may consult e.g. \cite{rao-ren}.

\section{The Hardy inequality for the Gaussian measure}\label{hardygauss}

\subsection{Inequalities on the real line}

We start with  inequalities for measures
 $\mu_n(\ud{ r})=r^{n-1} {\rm e}^{-r^2/2}{\rm d}r,$ $r>0,$
  where $n=1,2,...$
In our approach, we will
make the following assumption concerning the function $M:$
\begin{description}
 \item[{\bf(M)}] $M:[0,\infty) \rightarrow [0,\infty)$  is
 nonconstant and there exist $d_M, D_M>0$
such that $M$ satisfies the inequalities
\begin{equation} \label{gw}
\forall_{r \geq 0, a \geq 1}\,\,\,\, M(ar) \leq a^{D_{M}} \cdot M(r)
\end{equation}
and
\begin{equation} \label{gwgw}
\forall_{r \geq 0, a \in (0,1)}\,\,\,\, M(ar) \leq a^{d_{M}} \cdot M(r).
\end{equation}
\end{description}

Then, obviously, $D_{M} \geq d_{M}$ and $M$ is an increasing
continuous function with $M(0)=0$, $\lim_{r \to \infty}
M(r)=\infty$, and moreover $r\mapsto r^{-d_M}M(r)$ is
non-decreasing.

When we additionally assume that $D_{M}>2$ and $d_{M} \geq 2,$
then
 in particular\linebreak
  $\lim_{r \to 0^{+}} r^{-2}M(r)$ exists and is
finite. Hence by a natural convention we treat $r \mapsto
r^{-2}M(r)$ and $r \mapsto r^{-1}M(r)$ as continuous functions on
the whole $[0,\infty)$, the latter taking value $0$ at $0$.

\begin{lem}
Assume that $M$ satisfies {\bf (M)} with $d_M\geq 2,$ $D_M>2.$
Then for any $\lambda \geq 1/d_{M}$ and $r,s \geq 0$ we have
\begin{equation} \label{ni1}
r^{-1}M(r) \cdot s \leq (1-D_{M}^{-1})(\lambda D_{M})^{-1/(D_{M}-1)}M(r)+\lambda M(s)
\end{equation}
and
\begin{equation} \label{ni2}
r^{-2}M(r) \cdot s^{2} \leq (1-2D_{M}^{-1})(\lambda D_{M})^{-2/(D_{M}-2)}M(r)+2\lambda M(s).
\end{equation}
\end{lem}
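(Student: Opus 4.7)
The plan is to fix $r > 0$ (the case $r = 0$ being covered by the continuous extension conventions recalled just before the lemma) and reduce each inequality, after dividing both sides by $M(r)$, to a one-variable inequality in the ratio $t := s/r$ which can then be verified by elementary calculus.

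First I would record the structural consequences of condition \textbf{(M)}: inequalities (\ref{gw})--(\ref{gwgw}) are equivalent to saying that $r \mapsto r^{-D_M} M(r)$ is non-increasing and $r \mapsto r^{-d_M} M(r)$ is non-decreasing on $(0,\infty)$. Hence for every $r > 0$ and $t \geq 0$,
\[
\frac{M(tr)}{M(r)} \geq t^{D_M}\ \text{if}\ t \leq 1,\qquad \frac{M(tr)}{M(r)} \geq t^{d_M}\ \text{if}\ t \geq 1.
\]
Inserting these lower bounds on the right-hand side of (\ref{ni1}), dividing by $M(r)$, and writing $s = tr$ reduces the claim to the two scalar inequalities
\[
t \leq C + \lambda t^{D_M}\ \text{on}\ [0,1],\qquad t \leq C + \lambda t^{d_M}\ \text{on}\ [1,\infty),
\]
where $C := (1 - D_M^{-1})(\lambda D_M)^{-1/(D_M - 1)}$.

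The first of these is just the standard one-parameter Young inequality: the convex function $f(t) = C + \lambda t^{D_M} - t$ has a unique stationary point $t^{*} = (\lambda D_M)^{-1/(D_M - 1)}$, at which a direct computation gives $f(t^{*}) = 0$, so $f \geq 0$ on all of $[0, \infty)$. The second is more subtle since its right-hand side involves the weaker exponent $d_M$ while $C$ is tuned to $D_M$; I would handle it by showing that $g(t) = C + \lambda t^{d_M} - t$ is non-decreasing on $[1,\infty)$ (which holds because the assumption $\lambda \geq 1/d_M$ gives $g'(1) = \lambda d_M - 1 \geq 0$ and $g'$ is non-decreasing there), and then verifying the boundary condition $g(1) = C + \lambda - 1 \geq 0$. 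The latter amounts to the inequality $C(\lambda) \geq 1 - \lambda$, which I would establish by viewing $\lambda \mapsto C(\lambda) - (1 - \lambda)$ as a function on $(0,\infty)$: a short computation shows that it vanishes together with its derivative at $\lambda = 1/D_M$ and has positive second derivative throughout, hence is nonnegative.

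For (\ref{ni2}) the argument follows the same template, with $t$ replaced by $t^2$ and $\lambda$ by $2\lambda$: the two reduced scalar inequalities become $t^2 \leq C_2 + 2\lambda t^{D_M}$ on $[0,1]$ and $t^2 \leq C_2 + 2\lambda t^{d_M}$ on $[1,\infty)$, with $C_2 := (1 - 2D_M^{-1})(\lambda D_M)^{-2/(D_M - 2)}$; the critical point of the first one is $t^{*} = (\lambda D_M)^{-1/(D_M - 2)}$ and again yields value zero, while the boundary check at $t = 1$ reduces to $C_2 \geq 1 - 2\lambda$, proved by the same convexity argument (one uses $d_M \geq 2$ in the verification that $g_2(t) = C_2 + 2\lambda t^{d_M} - t^2$ is non-decreasing on $[1,\infty)$). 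The main obstacle throughout is precisely this reconciliation between the two growth exponents $d_M$ and $D_M$ on the range $t \geq 1$: it is exactly for this step that the hypothesis $\lambda \geq 1/d_M$, together with the fine information about how $C$ and $C_2$ depend on $\lambda$, is needed.
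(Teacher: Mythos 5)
Your proposal is correct and follows essentially the same route as the paper: both reduce, via the bounds $M(tr)/M(r)\geq t^{D_M}$ for $t\leq 1$ and $\geq t^{d_M}$ for $t\geq 1$, to the scalar inequalities $t^{\alpha}\leq C_{\alpha}+\alpha\lambda t^{D_M}$ on $[0,1]$ and $t^{\alpha}\leq C_{\alpha}+\alpha\lambda t^{d_M}$ on $[1,\infty)$, the first handled by identifying $C_{\alpha}$ as the supremum of $t^{\alpha}-\alpha\lambda t^{D_M}$ and the second by the boundary value at $t=1$ together with monotonicity of the derivative (using $\lambda\geq 1/d_M$ and $d_M\geq\alpha$). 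The only cosmetic differences are that you verify $C_{\alpha}\geq 1-\alpha\lambda$ by a separate convexity-in-$\lambda$ argument where the paper simply observes it is the case $u=1$ of the supremum computation, and that for $\alpha=2$ the function $C_2+2\lambda t^{D_M}-t^2$ is not convex, though its unique positive critical point is still the relevant global extremum, so your conclusion stands.
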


\begin{proof}
Because of the continuity we may and will assume that $r$ and $s$ are strictly positive. Let $\alpha \in \{1,2\}$ and
$\psi_{\alpha}(u)=u^{\alpha}-\alpha \lambda u^{D_{M}}$. Since
\[
(1-\alpha D_{M}^{-1})(\lambda D_{M})^{-\alpha/(D_{M}-\alpha)}=\sup_{u \in [0,1]} \psi_{\alpha}(u),
\]
by setting $u=s/r$ we rewrite both asserted inequalities in the case $s \leq r$ as
\[
\psi_{\alpha}(u) \leq (1-\alpha D_{M}^{-1})(\lambda D_{M})^{-\alpha/(D_{M}-\alpha)}+\alpha \lambda
\left( \frac{M(s)}{M(r)}-(s/r)^{D_{M}}\right),
\]
so that they immediately follow from $M(r)=M(rs^{-1} \cdot s)\leq (r/s)^{D_{M}}M(s)$.

For $s \geq r$ we have $M(r)=M(rs^{-1} \cdot s)\leq (r/s)^{d_{M}}M(s)$, so by setting $u=s/r$ we reduce our task to proving
\[
\forall_{u \geq 1}\,\,\,\, u^{\alpha} \leq (1-\alpha D_{M}^{-1})(\lambda D_{M})^{-\alpha/(D_{M}-\alpha)}+
\alpha \lambda u^{d_{M}}.
\]
The case $u=1$ of the above estimate follows by the previous argument, and the proof is finished by observing that
$\frac{\ud}{\ud u}u^{\alpha} \leq \alpha \lambda \cdot \frac{\ud}{\ud u}u^{d_{M}}$ for $u>1$ because $d_{M} \geq \alpha$
and $\lambda \geq 1/d_{M}$.
\end{proof}

\begin{prop} \label{alternatywa}
Assume that a non-constant function $M:[0,\infty) \rightarrow
[0,\infty)$ satisfies {\bf (M)} with $d_{M} \geq 2$ and
$D_{M}>2$. Let $n \geq 1$ and $\ud
\mu_{n}(r)=r^{n-1}\e^{-r^{2}/2}\,\ud r$. For a continuous and
piecewise $\C^{1}-$function $u:[0,\infty) \rightarrow \R$ set
\[
\!\!\!\K=\int_{0}^{\infty}\! M(r|u(r)|)\,\ud \mu_{n}(r),
\L=\int_{0}^{\infty}\! M(|u(r)|)\,\ud \mu_{n}(r),
\M=\int_{0}^{\infty}\! M(|u'(r)|)\,\ud \mu_{n}(r).
\]
Then
\begin{eqnarray} \label{term1}
\K &\leq& (D_{M}/d_{M})^{D_{M}/(D_{M}-2)}\L \;\; \leq \;\;
\e^{D_{M}/2}
\cdot \L\\[2mm]
&\mbox{\it or} &\nonumber\\
 \label{term2} \K &\leq& \left( \frac{1}{2}D_{M}\M^{1/D_{M}}+
\sqrt{\frac{1}{4}D_{M}^{2}\M^{2/D_{M}}+(D_{M}+n-2)\L^{2/D_{M}}}\right)^{D_{M}}.
\end{eqnarray}
If additionally $D_{M}+n \geq \e+2$ (which holds true whenever $n
\geq 3$), then
\[
\K \leq
\left( \frac{1}{2}D_{M}\M^{1/D_{M}}+\sqrt{\frac{1}{4}D_{M}^{2}\M^{2/D_{M}}+(D_{M}+n-2)\L^{2/D_{M}}} \right)^{D_{M}}.
\]
\end{prop}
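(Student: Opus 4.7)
The strategy is to derive, through one integration by parts and two applications of the Young-type bounds $(\ref{ni1})$ and $(\ref{ni2})$, a quadratic inequality in $\K^{1/D_M}$ whose resolution yields the second alternative; the first alternative emerges as the ``failure mode'' of the optimization.

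Set $w(r)=r|u(r)|$ and use $re^{-r^2/2}\,\ud r = -\ud(e^{-r^2/2})$ to integrate the identity $\K=\int_0^\infty M(w)\,r^{n-2}\cdot re^{-r^2/2}\,\ud r$ by parts. The boundary at $0$ vanishes for every $n\geq 1$ since $M(w)\leq Cw^{d_M}$ near zero and $d_M\geq 2$; at infinity it vanishes by the finiteness of $\K$. Exploiting $w'\leq|u|+r|u'|$ together with the identity $|u|r^{n-2}=w\,r^{n-3}$ and the inequality $wM'(w)\leq D_M M(w)$, this gives
\[
\K \leq (n+D_M-2)\,\K_0 + \int_0^\infty M'(w)|u'|\,\ud\mu_n, \qquad \K_0:=\int_0^\infty \frac{M(w)}{r^2}\,\ud\mu_n.
\]
Now apply $(\ref{ni2})$ with the substitution $r\mapsto w(r),\,s\mapsto |u(r)|$ (so $M(w)|u|^2/w^2 = M(w)/r^2$), and apply $(\ref{ni1})$ with $r\mapsto w(r),\,s\mapsto|u'(r)|$ combined with $M'(w)\leq D_M M(w)/w$. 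After integration one obtains, for every $\lambda,\mu\geq 1/d_M$,
\[
\K_0\leq \tfrac{D_M-2}{D_M}(\lambda D_M)^{-2/(D_M-2)}\K+2\lambda\L,
\]
\[
\int_0^\infty M'(w)|u'|\,\ud\mu_n\leq(D_M-1)(\mu D_M)^{-1/(D_M-1)}\K+D_M\mu\M.
\]

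The unconstrained minima of these two right-hand sides are attained at $\lambda^{*}=D_M^{-1}(\K/\L)^{(D_M-2)/D_M}$ (with value $\K^{(D_M-2)/D_M}\L^{2/D_M}$) and $\mu^{*}=D_M^{-1}(\K/\M)^{(D_M-1)/D_M}$ (with value $D_M\K^{(D_M-1)/D_M}\M^{1/D_M}$). The admissibility $\lambda^{*}\geq 1/d_M$ rewrites as $\K\geq(D_M/d_M)^{D_M/(D_M-2)}\L$; if it fails, the first alternative holds directly, and the estimate $(D_M/d_M)^{D_M/(D_M-2)}\leq e^{D_M/2}$ follows from the elementary inequality $x\leq 2e^{(x-2)/2}$ for $x\geq 2$, applied at $x=D_M$. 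The admissibility $\mu^{*}\geq 1/d_M$ rewrites as $\K\geq(D_M/d_M)^{D_M/(D_M-1)}\M$; if this fails, $\K\leq D_M^{D_M}\M$, and bounding the square root in the second alternative from below by $\tfrac{D_M}{2}\M^{1/D_M}$ shows this is already at most the right-hand side of the second alternative. When the first alternative fails, both optimizations apply and combine to yield $\K\leq(n+D_M-2)\K^{(D_M-2)/D_M}\L^{2/D_M}+D_M\K^{(D_M-1)/D_M}\M^{1/D_M}$; dividing by $\K^{(D_M-2)/D_M}$ and solving the resulting quadratic in $\K^{1/D_M}$ delivers the second alternative.

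For the final assertion, note that when $D_M+n\geq e+2$ one has $(D_M+n-2)^{D_M/2}\geq e^{D_M/2}\geq (D_M/d_M)^{D_M/(D_M-2)}$, and since the right-hand side of the second alternative exceeds $(D_M+n-2)^{D_M/2}\L$, it dominates the right-hand side of the first; so the second alternative holds unconditionally. The main technical care will lie in (i) justifying the boundary-term vanishing for all $n\geq 1$ using $d_M\geq 2$ and (ii) cleanly matching each failure mode of the two optimizations to the appropriate alternative of the statement.
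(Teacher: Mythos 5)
Your argument is, in its essential structure, the same as the paper's: the same integration by parts of $\K=-\int_0^\infty M(r|u(r)|)\,r^{n-2}\frac{\ud}{\ud r}(\e^{-r^2/2})\,\ud r$, the same application of (\ref{ni1}) and (\ref{ni2}) to the pairs $(r|u(r)|,|u'(r)|)$ and $(r|u(r)|,|u(r)|)$ respectively, the same optimal choices $\lambda^{*}=D_M^{-1}(\K/\L)^{(D_M-2)/D_M}$ and $\mu^{*}=D_M^{-1}(\K/\M)^{(D_M-1)/D_M}$, the same reading of the admissibility constraints $\lambda^{*},\mu^{*}\geq 1/d_M$ as the two degenerate cases, and the same completion of the square. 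All of these computations check out, as does your derivation of the final assertion from $D_M+n\geq \e+2$.

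The one genuine gap is the treatment of non-compactly-supported $u$. You run the argument directly on $u$ and dispose of the boundary term at infinity ``by the finiteness of $\K$.'' This is doubly problematic. First, finiteness of $\int_0^\infty M(r|u|)r^{n-1}\e^{-r^2/2}\,\ud r$ does not force $M(r|u(r)|)r^{n-2}\e^{-r^2/2}\to 0$ (the integrand may spike); this particular point is repairable, since the boundary term at $r=N$ enters with a favorable sign and can simply be dropped. Second, and more seriously, nothing guarantees a priori that $\K<\infty$ when $\L,\M<\infty$ --- that is essentially what the proposition asserts. Your scheme needs $\K\in(0,\infty)$: the optimal parameters $\lambda^{*},\mu^{*}$ depend on $\K$, and the final step amounts to absorbing multiples of $\K$ from the right-hand side into the left, which is vacuous when $\K=\infty$. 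The paper avoids this by first proving the statement for compactly supported $u$ (where $\K<\infty$ automatically) and then approximating a general $u$ by $u_N=u$ on $[0,N]$, $u_N(r)=\frac{2N-r}{N}u(r)$ on $[N,2N]$, $u_N=0$ beyond, checking $\K_N\to\K$ and $\L_N\to\L$ by monotone convergence and $\M_N\leq(1+N^{-1/2})^{D_M}\M+(2/\sqrt{N})^{D_M}\L\to\M$ via the splitting $\{|u|\leq N^{1/2}|u'|\}$ versus its complement. You need some such approximation step (or an equivalent a priori finiteness argument) to close the proof. A minor further point: under {\bf (M)} alone $M$ need not be differentiable, so $M'(w)$ should be read as an upper Dini derivative, using $\limsup_{y\to x}\frac{M(y)-M(x)}{y-x}\leq D_M M(x)/x$ as the paper does; this does not affect the substance.
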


\begin{proof}
We have $D_{M}/d_{M} \leq D_{M}/2 \leq \e^{\frac{D_{M}}{2}-1}$,
which proves the second inequality of (\ref{term1}). Therefore, if
$D_{M}+n-2 \geq \e$ then the right-hand side of (\ref{term2})
dominates the right-hand side of (\ref{term1}), which proves the
last assertion. Hence it suffices to prove that (\ref{term1}) or
(\ref{term2}) holds true. Additionally, let us assume at first
that $u$ is compactly supported. By a standard integration by
parts argument we obtain
\begin{eqnarray*}
\K&=&-\int_{0}^{\infty} M(r|u(r)|)r^{n-2}\frac{\ud}{\ud
r}(\e^{-r^{2}/2})\,\ud r \\
&\leq &D_{M}\int_{0}^{\infty} \frac{M(r|u(r)|)}{r|u(r)|} \cdot
|u'(r)|\,\ud \mu_{n}(r)+ (D_{M}+n-2)\int_{0}^{\infty}
\frac{M(r|u(r)|)}{(r|u(r)|)^{2}} \cdot u(r)^{2}\,\ud \mu_{n}(r).
\end{eqnarray*}
We have used the fact that $\frac{M(r|u(r)|)}{r|u(r)|}|u(r)|r^{n-1}\e^{-r^{2}/2} \Big|_{0}^{\infty}=0$ since $M(x)/x=0$ for $x=0$,
and the fact that (\ref{gw}) implies $\limsup_{y \to x} \frac{M(y)-M(x)}{y-x} \leq D_{M} \frac{M(x)}{x}$.
Now, for any $\lambda, \rho \geq 1/d_{M}$ we can apply (\ref{ni1}) to estimate the first summand, and (\ref{ni2}) to bound
the second summand, arriving at
\begin{eqnarray}
\K &\leq& D_{M}(1-D_{M}^{-1})(\rho
D_{M})^{-1/(D_{M}-1)}\K+D_{M}\rho\M \nonumber\\
&& +   (D_{M}+n-2)(1-2D_{M}^{-1})(\lambda
D_{M})^{-2/(D_{M}-2)}\K \nonumber\\
&& +2(D_{M}+n-2)\lambda\L. \label{ni3}
\end{eqnarray}
If $\K \leq (D_{M}/d_{M})^{D_{M}/(D_{M}-2)}\L$ then our assertion is trivially satisfied.\\
If  $\K \leq (D_{M}/d_{M})^{D_{M}/(D_{M}-1)}\M$ then
$\K \leq (D_{M}/2)^{D_{M}/(D_{M}-1)}\M \leq (D_{M}/2)^{D_{M}}\M$, and again there is nothing to prove.
Hence we may and do assume that
\[
\K \geq \max \left( (D_{M}/d_{M})^{D_{M}/(D_{M}-2)}\L, (D_{M}/d_{M})^{D_{M}/(D_{M}-1)}\M \right),
\]
so that $\lambda_{0}=D_{M}^{-1}(\K/\L)^{(D_{M}-2)/D_{M}}$ and $\rho_{0}=D_{M}^{-1}(\K/\M)^{(D_{M}-1)/D_{M}}$
satisfy $\lambda_{0}, \rho_{0} \geq 1/d_{M}$. By setting $\lambda=\lambda_{0}$ and $\rho=\rho_{0}$ in (\ref{ni3}) we obtain
\[
\K \leq D_{M}\M^{1/D_{M}}\K^{(D_{M}-1)/D_{M}}+(D_{M}+n-2)\L^{2/D_{M}}\K^{(D_{M}-2)/D_{M}},
\]
so that
\[
\left( \K^{1/D_{M}}-\frac{1}{2}D_{M}\M^{1/D_{M}}\right)^{2} \leq \frac{1}{4}D_{M}^{2}\M^{2/D_{M}}+(D_{M}+n-2)\L^{2/D_{M}},
\]
which ends the proof in the case of compactly supported $u$.

In the general case let $N \geq 1$, $u_{N}(r)=u(r)$ if $r \in [0, N]$, $u_{N}(r)=\frac{2N-r}{N}u(r)$ if $r \in [N, 2N]$,
and $u_{N}(r)=0$ if $r \geq 2N$. Let
\[
\K_{N}=\int_{0}^{\infty} M(r|u_{N}(r)|)\,\ud \mu_{n}(r),\;\;\;
\L_{N}=\int_{0}^{\infty} M(|u_{N}(r)|)\,\ud \mu_{n}(r),
\]
\[
\M_{N}=\int_{0}^{\infty} M(|u_{N}'(r)|)\,\ud \mu_{n}(r).
\]
Since $u_{N}$ is compactly supported we have
\[
\K_{N} \leq (D_{M}/d_{M})^{D_{M}/(D_{M}-2)}\cdot \L_{N}
\]
or
\[
\K_{N} \leq \left(\frac{1}{2}D_{M}\M_{N}^{1/D_{M}}+
\sqrt{\frac{1}{4}D_{M}^{2}\M_{N}^{2/D_{M}}+(D_{M}+n-2)\L_{N}^{2/D_{M}}}\right)^{D_{M}}.
\]
By the Monotone Convergence Theorem we obviously have $\K_{N} \to \K$ and $\L_{N} \to \L$
as $N \to \infty$ (note that $|u_{N}| \nearrow |u|$ and recall that $M$ is non-decreasing). Since there is
$u_{N}'(r)=\frac{2N-r}{N}u'(r)-\frac{1}{N}u(r)$ for all points $r \in (N, 2N)$ at which $u$ is differentiable, we get
$|u_{N}'(r)| \leq |u'(r)|+\frac{1}{N}|u(r)|$ for almost all $r>0$. Let
$
A_{N}=\{ r>0: u'(r) \hbox{\,\,exists and\,\,} |u(r)| \leq N^{1/2}|u'(r)|\}
$
and let $B_{N}=(0,\infty) \setminus A_{N}$. If $r \in A_{N}$ then
\[
M(|u_{N}'(r)|) \leq M\left((1+N^{-1/2})|u'(r)|\right) \leq \left(1+N^{-1/2}\right)^{D_{M}}M(|u'(r)|)
\]
whereas for almost all $r \in B_{N}$ we have
\[
M(|u_{N}'(r)|) \leq M\left((N^{-1}+N^{-1/2})|u(r)|\right)\leq \left(2/\sqrt{N}\right)^{D_{M}}M(|u(r)|).
\]
Hence $\M_{N} \leq \left(1+N^{-1/2}\right)^{D_{M}}\M+\left(2/\sqrt{N}\right)^{D_{M}}\L \stackrel{N \to \infty}{\longrightarrow} \M$, and the proof is finished.
The argument fails only
if $\L=\infty$, but then the main assertion is trivial.
\end{proof}

We may  slightly weaken the assertion of Proposition \ref{alternatywa} by turning it into a more convenient linear estimate:
\begin{equation} \label{liniowe}
\K \leq C_{1}\L+C_{2}\M,
\end{equation}
with positive $C_{1}$ and $C_{2}$ depending only on $n$, $D_{M}$
and $d_{M}$.
Elementary calculations permit us to obtain e.g.
\begin{eqnarray*}\label{stale-np}
C_1=2^{D_M-1}(D_M+n-2)^{\frac{D_M}{2}}, && C_2=2^{D_M-1}D_M^{D_M},
\end{eqnarray*}
valid when $D_{M}+n \geq \e+2.$ Also, when we consider $M(r)=r^p,$
no restrictions other that $p>2$ are required, which follows from
a straightforward calculation which uses integration by parts and
H\"{o}lder's inequality only. See also Corollary \ref{p-space}
below.

 For $\rho>1$ let
\[
\beta(\rho)=\sup_{w>0} \left\{ \left(\frac{1}{2}w+\sqrt{\frac{1}{4}w^{2}+1}\right)^{D_{M}}-\rho w^{D_{M}}\right\}
\]
and
\[
\gamma(\rho)=\sup_{w>0} \left\{ \left(\frac{1}{2}+\sqrt{\frac{1}{4}+w^{2}}\right)^{D_{M}}-\rho w^{D_{M}}\right\}.
\]
Obviously, $\beta(\rho)$ and $\gamma(\rho)$ are finite but they grow to infinity as $\rho \to 1^{+}$.
By simple considerations involving homogeneity we prove that (\ref{term2}) implies
\[
\K \leq \beta(\rho) \cdot (D_{M}+n-2)^{D_{M}/2} \L + \rho \cdot D_{M}^{D_{M}} \M
\]
and
\[
\K \leq \rho\cdot (D_{M}+n-2)^{D_{M}/2} \L+\gamma(\rho)\cdot D_{M}^{D_{M}} \M.
\]
Thus, under assumptions of Proposition \ref{alternatywa}, we may obtain  (\ref{liniowe}) with any $C_{1}$ greater than
$\max\left( (D_{M}+n-2)^{D_{M}/2}, (D_{M}/d_{M})^{D_{M}/(D_{M}-2)}\right)$  -- note that this quantity does not exceed
$\left( \max(D_{M}+n-2, \e) \right)^{D_{M}/2}$. However, this comes at the expense of $C_{2}$ getting large. Similarly, under
the same assumptions, we may prove (\ref{liniowe}) with any $C_{2}>D_{M}^{D_{M}}$, at the expense of $C_{1}$ getting
large.

If $M$ is a power function, $M(r)=r^{p}$ for some $p>2$, then $D_{M}=d_{M}=p$, and we obtain the following corollary
to Proposition \ref{alternatywa}.

\begin{cor} \label{p-space}
Assume that $n \geq 1$ and $p>2$. Let $\ud \mu_{n}=r^{n-1}\e^{-r^{2}/2}\,\ud r$. For an a.e. differentiable function
$u: [0,\infty) \rightarrow \R$ let
\[
\K=\int_{0}^{\infty} (r|u(r)|)^{p}\,\ud \mu_{n}(r),\;\;
\L=\int_{0}^{\infty} |u(r)|^{p}\,\ud \mu_{n}(r),\;\;
\M=\int_{0}^{\infty} |u'(r)|^{p}\,\ud \mu_{n}(r).
\]
Then for every $C_{2}>p^{p}$ there exists some $C_{1}=C_{1}(n,p,C_{2}) < \infty$,
and  for every $C_{1}>(n+p-2)^{p/2}$ there exists some $C_{2}=C_{2}(n,p,C_{1}) < \infty$,
such that for every continuous and piecewise $\C^{1}$ function $u: [0,\infty) \rightarrow \R$
there is
\[
\K \leq C_{1}\L+C_{2}\M.
\]
\end{cor}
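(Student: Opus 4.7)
The plan is to exploit the fact that for $M(r)=r^{p}$ one has $d_{M}=D_{M}=p$, so that the hypotheses $d_{M}\ge 2$ and $D_{M}>2$ of Proposition~\ref{alternatywa} reduce to $p>2$. The first branch of its dichotomy, inequality~(\ref{term1}), degenerates to $\K\le\L$ since $(D_{M}/d_{M})^{D_{M}/(D_{M}-2)}=1$, while the second branch, inequality~(\ref{term2}), specializes to
\[
\K \le \Bigl(\tfrac{p}{2}\M^{1/p}+\sqrt{\tfrac{p^{2}}{4}\M^{2/p}+(p+n-2)\L^{2/p}}\Bigr)^{p}.
\]
In either case the nonlinear bound is in hand. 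For the reader's benefit I would record the direct derivation for powers, since the excerpt advertises that only integration by parts and H\"older are needed: writing $r\e^{-r^{2}/2}\,\ud r=-\ud(\e^{-r^{2}/2})$ and integrating by parts (with truncation of $u$ exactly as in Proposition~\ref{alternatywa} to handle non-compact support) yields
\[
\K \le (p+n-2)\int_{0}^{\infty}(r|u|)^{p-2}|u|^{2}\,\ud\mu_{n}(r)+p\int_{0}^{\infty}(r|u|)^{p-1}|u'|\,\ud\mu_{n}(r),
\]
and then H\"older's inequality with exponents $(p/(p-2),p/2)$ and $(p/(p-1),p)$ respectively reduces the problem to the quadratic inequality $\K^{2/p}-p\M^{1/p}\K^{1/p}-(p+n-2)\L^{2/p}\le 0$, which is solved by the displayed bound.

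The only step with genuine content is the linearization: converting $\K^{1/p}\le A+\sqrt{A^{2}+B^{2}}$, with $A=\tfrac{p}{2}\M^{1/p}$ and $B=(p+n-2)^{1/2}\L^{1/p}$, into an estimate $\K\le C_{1}\L+C_{2}\M$ realising each of the two claimed trade-offs. I would use a one-variable homogeneity argument: setting $s=A/B$ (and treating $B=0$ separately, where $\K\le(2A)^{p}=p^{p}\M$ is immediate) and dividing through by $B^{p}$, the desired linearization is equivalent to
\[
\bigl(s+\sqrt{s^{2}+1}\bigr)^{p}\le\tilde C_{1}+\tilde C_{2}\,s^{p},\qquad s\ge 0,
\]
with $\tilde C_{1}=C_{1}/(p+n-2)^{p/2}$ and $\tilde C_{2}=(2/p)^{p}C_{2}$. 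Since the left-hand side equals $1$ at $s=0$ and is asymptotic to $2^{p}s^{p}$ at infinity, the sharp thresholds $\tilde C_{1}>1$ and $\tilde C_{2}>2^{p}$ match exactly the advertised $C_{1}>(p+n-2)^{p/2}$ and $C_{2}>p^{p}$.

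Both halves of the corollary then follow from standard endpoint analysis. Given any $\tilde C_{2}>2^{p}$, the function $s\mapsto(s+\sqrt{s^{2}+1})^{p}-\tilde C_{2}s^{p}$ is continuous on $[0,\infty)$, equals $1$ at $s=0$, and tends to $-\infty$ at infinity, so its supremum is finite and serves as an admissible $\tilde C_{1}$. Symmetrically, given any $\tilde C_{1}>1$, the function $s\mapsto s^{-p}\bigl[(s+\sqrt{s^{2}+1})^{p}-\tilde C_{1}\bigr]$ is continuous on $(0,\infty)$, tends to $-\infty$ at $0$ and to $2^{p}$ at infinity, hence attains a finite supremum, which serves as an admissible $\tilde C_{2}$. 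The only real difficulty, modest as it is, lies in this endpoint bookkeeping; the integration by parts and the H\"older step are routine.
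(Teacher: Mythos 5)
Your proposal is correct and follows essentially the same route as the paper: the paper's proof of Corollary \ref{p-space} is just the ``preceding considerations,'' i.e.\ the $\beta(\rho)$, $\gamma(\rho)$ supremum construction, which is exactly your one-variable homogeneity argument in the normalization $w=2s$, with the same sharp thresholds $(n+p-2)^{p/2}$ and $p^{p}$. Your added direct integration-by-parts/H\"older derivation for $M(r)=r^{p}$ matches the computation the paper explicitly says is available but does not write out.
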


\begin{proof} Preceding considerations.
\end{proof}

\begin{rem}{\rm It is known (see \cite{bct}) that for $p=2$ one
has
\[\frac{1}{4} \K \leq \L+ \frac{n}{2}\, M ,\]
(i.e. $C_1=2n,$ $C_2=4$)
and that the constant $\frac{1}{4}$ cannot be improved.
 Additionally, if $C_{2}=4$ then (\ref{liniowe})
holds true with $C_{1}=2n$ but it fails for $C_{1} < 2n.$
In this case ($p=2$), our method permits to lower $C_1$
as close to $n$  as we wish, again at the expense of getting $C_2$
large. Getting $C_1=n$ is not possible.}\end{rem}

 The constants $C_{1}$ and $C_{2}$ in Corollary
\ref{p-space} (and thus also the bounds of Proposition
\ref{alternatywa}) are quite good. Indeed, assume that
(\ref{liniowe}) holds with some constants $C_{1}$ and $C_{2}$. Let
$\alpha \in [0,1)$. A straightforward calculation yields that for
$u(r)=\exp\left(\frac{\alpha r^{2}}{2p}\right)$ there is
\begin{eqnarray*}
\K=\int_0^\infty (ru_\alpha(r))^p r^{n-1} \e^{r^{2}/2}\,\ud r&=&
\int_0^\infty r^{n+p-1}\e^{-\frac{(1-\alpha)r^2}{2}}\,\ud r=\\
(1-\alpha)^{-(n+p)/2}\int_0^\infty
\rho^{n+p-1}\e^{-\rho^{2}/2}\,\ud\rho
&=&(1-\alpha)^{-(n+p)/2}
2^{(n+p-2)/2}\,\Gamma(\textstyle\frac{n+p}{2}),
\end{eqnarray*}
\begin{eqnarray*}
\L=\int_0^\infty (u_\alpha(r))^p r^{n-1} \e^{-r^{2}/2}\,\ud r&=&
(1-\alpha)^{-n/2}
2^{(n-2)/2}\,\Gamma(\textstyle\frac{n}{2}),\\
\M=\int_0^\infty (u'_\alpha(r))^p r^{n-1} \e^{-\frac{r^2}{2}}\,\ud r&=&
(\alpha/p)^p (1-\alpha)^{-(n+p)/2}
2^{(n+p-2)/2}\,\Gamma(\textstyle\frac{n+p}{2}).
\end{eqnarray*}
so that
\begin{equation} \label{alfa}
(1-\alpha)^{-p/2}
2^{p/2}\Gamma(\textstyle\frac{n+p}{2})\leq C_1
\Gamma(\textstyle\frac{n}{2}) +C_2 (\alpha/p)^p
(1-\alpha)^{-p/2}
2^{p/2}\Gamma(\textstyle\frac{n+p}{2}),
\end{equation}
Were $C_{2} \leq p^{p}$, (\ref{alfa}) would imply that $C_{1} \geq \frac{2^{p/2}\Gamma((n+p)/2)}{\Gamma(n/2)}
\frac{1-\alpha^{p}}{(1-\alpha)^{p/2}} \to \infty$ as $\alpha \to 1^{-}$. The obtained contradiction proves that in general (\ref{liniowe})
cannot hold with $C_{2} \leq p^{p}$.

Moreover, by taking $\alpha=0$ in (\ref{alfa}) we immediately see that in general (\ref{liniowe}) cannot hold with
$C_{1}<2^{p/2}\Gamma( (n+p)/2)/\Gamma(n/2)$. Note that by Stirling's formula we have
\[
\lim_{n \to \infty} \frac{2^{p/2}\Gamma( (n+p)/2)}{(n+p-2)^{p/2}\Gamma(n/2)}=1,
\]
so the assumption that $C_{1}>(n+p-2)^{p/2}$ in Corollary \ref{p-space} is also (asymptotically) quite tight.

Proposition \ref{alternatywa} provides reasonable bounds but its assumptions are a bit restrictive in that they require the function
$r \mapsto r^{-2}M(r)$ to be non-decreasing. However, we may also prove (\ref{liniowe})--type inequality if we replace (\ref{gwgw}) by convexity.
This time we do not push for the best possible constants.

\begin{prop} \label{wklep1}
Let $M:[0,\infty) \rightarrow [0,\infty)$ be an increasing and convex
function with $M(0)=0$. Assume that for some $D_M>0,$
$M(\alpha x) \leq \alpha^{D_{M}}M(x)$ for any $\alpha \geq 1$, $x
\geq 0$ (doubling). For $n \geq 1$ let $\ud
\mu_{n}(r)=r^{n-1}\e^{-r^{2}/2}\,\ud r$. Then for any continuous,
piecewise $\C^{1}$ function $u: [0,\infty) \rightarrow \R$
we have
\begin{equation}\label{ww}
\int_{0}^{\infty} M(r|u(r)|)\,\ud \mu_{n}(r) \leq
C_{1} \cdot \int_{0}^{\infty} M(|u(r)|)\,\ud \mu_{n}(r)+
C_{2} \cdot \int_{0}^{\infty} M(|u'(r)|)\,\ud \mu_{n}(r),
\end{equation}
with $C_{1}$ and $C_{2}$ depending only on $n$ and $D_{M}$.
\end{prop}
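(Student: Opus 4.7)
Plan. I would follow the integration-by-parts scheme from the proof of Proposition \ref{alternatywa}. Writing $r^{n-1}e^{-r^2/2}=-r^{n-2}(e^{-r^2/2})'$, integrating by parts (with boundary terms vanishing at $r=0$ when $n\ge 2$; the case $n=1$ is handled by an analogous direct argument), and using the elementary consequence of doubling $M'(x)\le D_M M(x)/x$, we obtain
\[
\K \;\le\; D_M\!\int_0^\infty \frac{M(r|u|)}{r|u|}|u'|\,\ud\mu_n \;+\; (D_M+n-2)_+\!\int_0^\infty \frac{M(r|u|)}{r^2}\,\ud\mu_n.
\]
This step uses only doubling, not any lower growth condition on $M$.

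For the first summand I would establish the Young-type inequality: for any $\eps\in(0,1]$ and $a,y\ge 0$,
\[
\frac{M(a)}{a}\,y \;\le\; \eps\,M(a)\;+\;\eps^{\,1-D_M}M(y).
\]
The proof is casework on $\alpha=y/a$, using the dual doubling $M(\alpha a)\ge \alpha^{D_M}M(a)$ for $\alpha\in(0,1]$ (consequence of doubling applied to $1/\alpha\ge 1$) together with $M(x)/x$ non-decreasing (from convexity and $M(0)=0$). Substituting $a=r|u|$, $y=|u'|$ and integrating against $\ud\mu_n$, the first summand is dominated by $D_M\eps\,\K+D_M\eps^{\,1-D_M}\M$, the $\K$-piece ready to be absorbed.

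For the second summand I would split at $r=1$. On $[0,1]$: since $M(x)/x$ is non-decreasing and $r|u|\le|u|$, we have $M(r|u|)\le r\,M(|u|)$, so $M(r|u|)/r^2\le M(|u|)/r$, reducing the task to bounding $\int_0^1 M(|u|)\,r^{n-2}e^{-r^2/2}\,dr$ by a constant depending on $n$ and $D_M$ times $\L+\M$, via a trace-type estimate controlling $M(|u(1)|)$ by averages over a fixed shell $[1/2,3/2]$. On $[1,\infty)$ I would use the identity $ru(r)=u(1)+\int_1^r[u(t)+tu'(t)]\,dt$ together with convexity $M(a+b+c)\le 3^{D_M-1}(M(a)+M(b)+M(c))$, compute the three resulting integrals by Fubini, and use the Gaussian tail estimate $\int_s^\infty r^\alpha e^{-r^2/2}\,dr\le C_{n,\alpha}\,s^{\alpha-1}e^{-s^2/2}$ to dominate each by a constant multiple of $\L$ or $\M$. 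Collecting everything and choosing $\eps=1/(2D_M)$ allows absorbing $D_M\eps\K$ on the left to yield (\ref{ww}) with constants depending only on $n$ and $D_M$.

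The main obstacle is the Fubini calculation on $[1,\infty)$: the natural bookkeeping produces weights of the form $s^{D_M-2}$ (and higher) multiplying $M(|u|)\,\ud\mu_n$ and $M(|u'|)\,\ud\mu_n$, which are unbounded when $D_M>2$. Controlling these weighted integrals requires either iterating the integration by parts so that the exponent on $r$ drops by $2$ at each pass until it becomes non-positive (terminating in finitely many steps), or the regularization trick of replacing $M$ by $M+\delta\,\Phi$, where $\Phi$ satisfies the stronger hypotheses of Proposition \ref{alternatywa}, applying that proposition, and passing to $\delta\to 0^+$.
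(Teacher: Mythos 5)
Your setup (integration by parts, the bound $M'(x)\le D_MM(x)/x$, and the Young-type inequality $\frac{M(a)}{a}y\le\eps M(a)+\eps^{1-D_M}M(y)$, which is the paper's Lemma \ref{wklep2} in disguise) is correct and matches the paper's strategy for the $u'$-term. The genuine gap is in your treatment of the second summand $(D_M+n-2)\int_0^\infty M(r|u(r)|)r^{-2}\,\ud\mu_n(r)$, and you flag it yourself without resolving it. First, for $n=1$ this quantity can be identically $+\infty$: the hypotheses here allow $M'(0^+)>0$ (e.g.\ $M(x)=x+x^3$, which is increasing, convex, doubling with $D_M=3$ but not an $N$-function), and then $M(r|u|)/r^2\gtrsim |u|/r$ near $0$, so integrating by parts over all of $(0,\infty)$ is not even a valid starting point; your $[0,1]$ reduction to $\int_0^1M(|u|)r^{n-2}e^{-r^2/2}\,\ud r$ inherits the same divergence when $n=1$. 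Second, on $[1,\infty)$ your Fubini scheme produces, as you admit, unbounded weights $s^{D_M-2}$, and neither proposed repair is carried out; the regularization $M+\delta\Phi$ in particular does not work, because $M+\delta\Phi$ satisfies the lower-growth condition (\ref{gwgw}) only with exponent $\min(d_M,d_\Phi)$, and a general convex $M$ gives only $d_M=1$, so Proposition \ref{alternatywa} (which needs $d\ge2$) never becomes applicable, for any $\delta>0$.

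The paper's resolution is much simpler and is the idea you are missing: do not try to dominate $\int M(r|u|)r^{-2}\,\ud\mu_n$ by $\L$ and $\M$ at all --- absorb it into $\K$. Split at a \emph{large} threshold $\kappa=2(D_M+n)^{1/2}$ rather than at $1$. On $[0,\kappa]$ there is no integration by parts: the pointwise bound $M(r|u|)\le\kappa^{D_M}M(|u|)$ gives $\kappa^{D_M}\L$ outright (no trace estimate, no singular weight). On $[\kappa,\infty)$ one integrates by parts and the troublesome term is bounded by $\kappa^{-2}(D_M+n-2)\K\le\frac14\K$, which cancels against the left-hand side together with the $\eps D_M\K$ coming from the Young inequality with $\eps=(4D_M)^{-1}$. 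The only price is a boundary term $M(\kappa|u(\kappa)|)\kappa^{n-2}e^{-\kappa^2/2}$ at $r=\kappa$, which is controlled by the mean-value-plus-Jensen trace argument you sketched for $r=1$ (choose $\tilde r\in[\kappa,\kappa+1]$ with $M(|u(\tilde r)|)\le e^{2\kappa^2}\L$ and write $|u(\kappa)|\le|u(\tilde r)|+\int_\kappa^{\kappa+1}|u'|$). As written, your proof does not close; with the absorption-at-large-$\kappa$ device it would reduce essentially to the paper's argument.
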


\noindent Since $M$ is convex and increasing there must be $D_{M}
\geq 1$. Observe that when $M$ is an $N-$ function satisfying
the $\Delta_2-$condition, then the assumptions of Proposition
\ref{wklep1} are satisfied.

 We need a simple lemma.

\begin{lem} \label{wklep2}
For every $\eps \in (0,1]$ and every $a, b \geq 0$ we have
\[
M(a)b \leq \eps M(a)+\eps^{-D_{M}}M(ab).
\]
\end{lem}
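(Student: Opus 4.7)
The plan is to prove the inequality by a simple case analysis on the size of $b$ relative to $\eps$ and to $1$. First I would dispose of the trivial case $b \leq \eps$: here the claim reduces to $M(a)b \leq \eps M(a)$, which is immediate because the right-hand side of the target bound also contains the non-negative term $\eps^{-D_M}M(ab)$.

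For $b > \eps$ I would instead prove the stronger inequality $M(a)b \leq \eps^{-D_M}M(ab)$, and split again according to whether $b \geq 1$ or $\eps < b < 1$. Two structural facts drive the argument. Convexity together with $M(0)=0$ makes $t \mapsto M(t)/t$ non-decreasing, so $M(s)/s \leq M(t)/t$ for $0 < s \leq t$; and the doubling hypothesis $M(\alpha x) \leq \alpha^{D_M}M(x)$ for $\alpha \geq 1$ makes $t \mapsto M(t)/t^{D_M}$ non-increasing. I would record as an elementary preliminary that $M$ being convex, increasing and vanishing at $0$ forces $D_M \geq 1$, since $M$ is already super-linear and any doubling exponent must respect that.

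If $b \geq 1$, then $a \leq ab$ and monotonicity of $M(t)/t$ gives $M(a) \leq M(ab)/b$, hence $M(a)b \leq M(ab) \leq \eps^{-D_M}M(ab)$. If instead $\eps < b < 1$, then $ab \leq a$; applying the doubling bound with $\alpha = 1/b \geq 1$ and $x = ab$ yields $M(a) \leq b^{-D_M}M(ab)$, so $\eps^{-D_M}M(ab) \geq (b/\eps)^{D_M}M(a)$. It then suffices to verify $(b/\eps)^{D_M} \geq b$, which holds because $b/\eps > 1$ and $D_M \geq 1$ give $(b/\eps)^{D_M} \geq b/\eps$, while $\eps \leq 1$ gives $b/\eps \geq b$.

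I do not expect any genuine obstacle: the lemma is a weighted Young-type splitting, and the only real choice is how to partition $b$ so that convexity handles the region $b \geq 1$ while doubling handles $b < 1$. A one-line derivation by maximising an auxiliary function of $b$ is available, but I would favour the explicit case split for transparency and because it makes the role of the exponent $D_M$ on the constant $\eps^{-D_M}$ visibly sharp.
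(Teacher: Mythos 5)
Your proof is correct and follows essentially the same route as the paper's: the trivial case $b\leq\eps$, convexity (equivalently, monotonicity of $M(t)/t$) for $b\geq 1$, and the doubling bound $M(a)=M(b^{-1}\cdot ab)\leq b^{-D_M}M(ab)$ for $\eps<b<1$. The only cosmetic difference is that in the last case the paper bounds $M(a)b\leq M(a)$ directly and then uses $b^{-D_M}\leq\eps^{-D_M}$, whereas you verify $(b/\eps)^{D_M}\geq b$; both are valid.
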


\begin{proof}
If $b \geq 1$ then
\[
M(a)=M(b^{-1} \cdot ab+(1-b^{-1})\cdot 0) \leq b^{-1}M(ab)+(1-b^{-1})M(0)=b^{-1}M(ab)
\]
and the inequality obviously holds. For $b \in [0,\eps]$ the inequality is trival. For $b \in (\eps,1)$ we have
\[
M(a)b \leq M(a)=M(b^{-1} \cdot ab) \leq
(1/b)^{D_{M}}M(ab) \leq \eps^{-D_{M}}M(ab).
\]
\end{proof}

\noindent {\em Proof of Proposition \ref{wklep1}.} Again, we first
assume additionally that $u$ is compactly supported. Let
\[
\K=\int_{0}^{\infty} M(r|u(r)|)\, \ud \mu_{n}(r),
\L=\int_{0}^{\infty} M(|u(r)|)\,\ud \mu_{n}(r),
\M=\int_{0}^{\infty} M(|u'(r)|)\,\ud \mu_{n}(r).
\]
For any $\kappa \geq 1$ we have $\L \geq \e^{-(\kappa+1)^{2}/2} \cdot \int_{\kappa}^{\kappa+1} M(|u(r)|)\,\ud r,$
so that there exists $\tilde{r} \in [\kappa, \kappa+1]$ such that $M(|u(\tilde{r})|) \leq \e^{2\kappa^{2}}\L$. We also have
\[
M\left( \int_{\kappa}^{\kappa+1} |u'(r)|\,\ud r\right) \leq \int_{\kappa}^{\kappa+1} M(|u'(r)|)\,\ud r \leq
\]
\[
\e^{(\kappa+1)^{2}/2} \int_{\kappa}^{\kappa+1} M(|u'(r)|)r^{n-1}\e^{-r^{2}/2}\,\ud r \leq \e^{2\kappa^{2}}\M.
\]
Hence
\[
M(|u(\kappa)|) \leq M\left(|u(\tilde{r})|+\int_{\kappa}^{\kappa+1} |u'(r)|\,\ud r\right) \leq
M\left(2\max\left(|u(\tilde{r})|,\int_{\kappa}^{\kappa+1} |u'(r)|\,\ud r\right)\right) \leq
\]
\[
2^{D_{M}}\max\left(M(|u(\tilde{r})|),M\left(\int_{\kappa}^{\kappa+1} |u'(r)|\,\ud r\right)\right) \leq
2^{D_{M}}\e^{2\kappa^{2}}\max(\L,\M) \leq 2^{D_{M}}\e^{2\kappa^{2}}(\L+\M).
\]
We have
\[
\K=\int_{0}^{\kappa} M(r|u(r)|)r^{n-1}\e^{-r^{2}/2}\,\ud r+\int_{\kappa}^{\infty} M(r|u(r)|)r^{n-1}\e^{-r^{2}/2}\,\ud r \leq
\]
\[
\kappa^{D_{M}}\L-\int_{\kappa}^{\infty} M(r|u(r)|)r^{n-2}\frac{\ud}{\ud r}(\e^{-r^{2}/2})\,\ud r \leq
\]
\[
\kappa^{D_{M}}\L+M(\kappa|u(\kappa)|)\kappa^{n-2}\e^{-\kappa^{2}/2}+(n-2)\int_{\kappa}^{\infty} M(r|u(r)|)r^{n-3}\e^{-r^{2}/2}\,\ud r+
\]
\[
D_{M}\int_{\kappa}^{\infty} \frac{M(r|u(r)|)}{r|u(r)|}(|u(r)|+r|u'(r)|)r^{n-2}\e^{-r^{2}/2}\,\ud r \leq
\]
\[
\kappa^{D_{M}}\L+2^{D_{M}}\e^{2\kappa^{2}}\kappa^{D_{M}+n-2}(\L+\M)+
\kappa^{-2}(D_{M}+n-2)\K+
\]
\[
D_{M}\int_{(\kappa, \infty) \cap \{ u \neq 0\}} M(r|u(r)|)\left|\frac{u'(r)}{ru(r)}\right|r^{n-1}\e^{-r^{2}/2}\,\ud r
\stackrel{\hbox{{\small{Lem. \ref{wklep2}}}}}{\leq}
\]
\[
\kappa^{D_{M}}\L+2^{D_{M}}\e^{2\kappa^{2}}\kappa^{D_{M}+n-2}(\L+\M)+\kappa^{-2}(D_{M}+n)\K+\eps D_{M}\K+\eps^{-D_{M}}D_{M}\M.
\]
By setting $\eps=(4D_{M})^{-1}$ and $\kappa=2(D_{M}+n)^{1/2}$, upon obvious cancellations we obtain the asserted estimate.
Finally, we may remove the compact support assumption in the same way as in the proof of Proposition \ref{alternatywa}.
\qed

When $M$ is an $N-$function satisfying the $\Delta_2-$condition, then using standard Orlicz-space methods one
can obtain the Hardy inequality for norms.

\begin{cor}\label{norms}
Suppose that $M:\zi\to\zi$ is an $N-$function satisfying the $\Delta_2-$condition. Then the assertion of Proposition
\ref{wklep1} holds true, and, moreover, there exists a constant $C>0$ such that for any continuous,
piecewise $\C^{1}$ function \linebreak
$u: [0,\infty) \rightarrow \R$
\begin{equation}\label{www}
 \|x u(x)\|_{L^{M}(\zi,\mu_n)} \leq C\left( \|u(x)\|_{L^{M}(\zi,\mu_n)}+\|u'(x)\|_{L^{M}(\zi,\mu_n)}\right).
 \end{equation}
 \end{cor}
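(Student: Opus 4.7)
The plan is to deduce the modular inequality (\ref{ww}) as an immediate instance of Proposition \ref{wklep1}, and then pass from modular to norm inequality via a standard homogenization argument that uses the $\Delta_2$-condition together with the convexity of $M$.

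First I would note that any $N$-function $M$ satisfying the $\Delta_2$-condition automatically satisfies the assumptions of Proposition \ref{wklep1}: $M$ is increasing, convex, with $M(0)=0$, and the $\Delta_2$-condition (\ref{d3}) with constant $C_M$ yields $M(\alpha x)\leq \alpha^{D_M}M(x)$ for all $\alpha\geq 1$, $x\geq 0$, with $D_M=\log_2 C_M$ (by iterating $M(2^kx)\leq C_M^kM(x)$ and monotonicity). Therefore Proposition \ref{wklep1} applies and yields (\ref{ww}) with constants $C_1, C_2$ depending only on $n$ and $D_M$.

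Next, to derive (\ref{www}), I would use the usual normalization. Writing $a=\|u\|_{L^M(\mu_n)}$ and $b=\|u'\|_{L^M(\mu_n)}$, I may assume $s:=a+b<\infty$ (otherwise there is nothing to prove). Set $\tilde u=u/s$, so that $\|\tilde u\|_{L^M(\mu_n)}\leq 1$ and $\|\tilde u'\|_{L^M(\mu_n)}\leq 1$. Under $\Delta_2$, (\ref{norm1}) becomes an equality at the Luxemburg norm, and from this together with monotonicity of $M$ one gets
\[
\int_0^\infty M(|\tilde u(r)|)\,\ud\mu_n(r)\leq 1,\qquad \int_0^\infty M(|\tilde u'(r)|)\,\ud\mu_n(r)\leq 1.
\]
Applying the modular inequality (\ref{ww}) to $\tilde u$ gives
\[
\int_0^\infty M(r|\tilde u(r)|)\,\ud\mu_n(r)\leq C_1+C_2.
\]

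Finally, to convert this modular estimate back into a norm bound on $r\,\tilde u(r)$, I use convexity: since $M(0)=0$ and $M$ is convex, $M(x/K)\leq M(x)/K$ for any $K\geq 1$. Hence for $K=\max(1,C_1+C_2)$,
\[
\int_0^\infty M\!\left(\frac{r|\tilde u(r)|}{K}\right)\ud\mu_n(r)\leq \frac{1}{K}\int_0^\infty M(r|\tilde u(r)|)\,\ud\mu_n(r)\leq 1,
\]
so $\|r\,\tilde u(r)\|_{L^M(\mu_n)}\leq K$. Multiplying through by $s$ gives (\ref{www}) with $C=\max(1,C_1+C_2)$.

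The only mildly delicate point is this last convexity trick producing a usable $K$; everything else is mechanical once Proposition \ref{wklep1} is in hand. In particular, no new analytic estimate is required, and the constant $C$ in (\ref{www}) depends solely on $n$ and the doubling constant $D_M$ of $M$.
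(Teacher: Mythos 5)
Your proof is correct and follows essentially the same route as the paper: normalize $u$ by $\|u\|_{L^M(\mu_n)}+\|u'\|_{L^M(\mu_n)}$, apply the modular inequality (\ref{ww}) to the normalized function to bound its modular by $C_1+C_2$, and convert back to a norm bound. The only (immaterial) difference is in the last step, where you use the convexity scaling $M(x/K)\leq M(x)/K$ while the paper invokes the inequality $\|f\|_{L^M(\mu)}\leq\int M(|f|)\,\ud\mu+1$; both are standard and yield the claim.
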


\begin{proof} We only need to prove (\ref{www}).
 For
short, write $\|u\|_M$ instead of $\|u\|_{L^M(\zi,\mu_n)}.$ For
a given nonconstant function $u,$ consider
$\widetilde{u}=\frac{u}{\|u\|_M+\| u'\|_M},$ and write
(\ref{ww}) for function $\widetilde u:$
\begin{eqnarray*}
\int_0^\infty M(|x\widetilde u(x)|)\,\ud\mu_n(x) &\leq  & C_{1}
\int_0^\infty M\left(|\widetilde {u}(x)|\right)\,\ud\mu_n +C_2
\int_0^\infty M\left(|\widetilde{u}'(x)|\right)\,\ud\mu_n
\\
&\leq&  C_{1} \int_0^\infty
M\left(\frac{|u|}{\|u\|_M}\right)\,\ud\mu_n +C_2 \int_0^\infty
M\left(\frac{| u'|}{\| u'\|_M}\right)\,\ud\mu_n\\[2mm]
& =&  C_1 +C_2
\end{eqnarray*}
(we have used (\ref{norm1})). It follows that $\|x\widetilde
u(x)\|_M\leq C_1+C_2+1,$ and since $\|\cdot\|_M$ is a
norm, (\ref{www}) follows.
\end{proof}

\subsection{The $n$-dimensional case}\label{endim} Using
the one-dimensional inequality as a tool, now we derive the Hardy
inequality for the $n-$dimensional Gaussian measure. We start with the
statement under general assumptions on the function $M$ involved,
which however does not give a good control on the resulting
constants.

\begin{theo}\label{HN}
Let $M:[0,\infty) \rightarrow \infty$ be an increasing and convex
function with $M(0)=0$. Assume that $M(a x) \leq a^{D_{M}}M(x)$
for any $a \geq 1$, $x \geq 0$ (doubling). Let $n\geq 1,$ and let
${\rm d}\gamma_n(x) = {\rm e}^{-|x|^2/2}.$
 Then for $u\in C^1_0(\rn)$ we have:
\begin{equation}\label{hn1}
\int_\rn M(|x|\, |u(x)|)\,\ud\gamma_n(x)           \leq
C_1 \int_\rn M(|u(x)|)\,\ud\gamma_n(x)+C_2 \int_\rn
M(|\nabla u|)\,\ud\gamma_n(x),
\end{equation}
where $ C_1, C_2$
 are  constants from
Proposition \ref{wklep1}. When $M$ is an $N-$function, then also
\begin{equation}\label{hn11}
\| u(x)\,x\|_{L^M(\rn,\gamma_n)}\leq
C\left(\|u(x)\|_{L^M(\rn,\gamma_n)}+\|\nabla
u\|_{L^M(\rn,\gamma_n)}\right).
\end{equation}
\end{theo}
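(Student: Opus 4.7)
The plan is to reduce both assertions to their one-dimensional radial analogues by slicing $\rn$ along rays through the origin. Proposition \ref{wklep1} and Corollary \ref{norms} already supply, respectively, the modular and the norm Hardy inequality on $(0,\infty)$ against $\ud\mu_n=r^{n-1}\e^{-r^2/2}\,\ud r$, so essentially nothing beyond polar integration is needed.

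For (\ref{hn1}), I would pass to polar coordinates $x=r\theta$, $r=|x|$, $\theta\in S^{n-1}$, giving the disintegration
\[
\int_{\rn} F(x)\,\ud\gamma_n(x) = (2\pi)^{-n/2}\int_{S^{n-1}}\int_0^\infty F(r\theta)\,\ud\mu_n(r)\,\ud\sigma(\theta).
\]
For each fixed $\theta\in S^{n-1}$, the slice $u_\theta(r):=u(r\theta)$ lies in $C^1[0,\infty)$ and has compact support (since $u\in C^1_0(\rn)$), and the chain rule together with the Cauchy--Schwarz inequality yields $|u_\theta'(r)|=|\nabla u(r\theta)\cdot\theta|\leq|\nabla u(r\theta)|$. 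Applying Proposition \ref{wklep1} to $u_\theta$, using monotonicity of $M$ to replace $M(|u_\theta'(r)|)$ by $M(|\nabla u(r\theta)|)$, integrating the resulting inequality in $\theta$ against $\ud\sigma$, and multiplying by $(2\pi)^{-n/2}$ allows Fubini to recombine the three integrals into integrals over $\rn$ against $\gamma_n$; the identity $r|u_\theta(r)|=|x|\,|u(x)|$ then delivers (\ref{hn1}) with the same constants $C_1,C_2$ as in Proposition \ref{wklep1}.

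For the norm version (\ref{hn11}) I would mimic the normalization argument of Corollary \ref{norms}: given nonzero $u$, set $K:=\|u\|_{L^M(\rn,\gamma_n)}+\|\nabla u\|_{L^M(\rn,\gamma_n)}$, apply (\ref{hn1}) to $u/K$, and use monotonicity of $M$ together with $|u/K|\leq|u|/\|u\|_{L^M(\rn,\gamma_n)}$ (and analogously for the gradient) and (\ref{norm1}) to bound each of the two modular integrals on the right-hand side of (\ref{hn1}) by $1$. This gives $\int_{\rn} M(|x|\,|u(x)/K|)\,\ud\gamma_n\leq C_1+C_2$, and (\ref{norm}) yields $\|x\cdot u/K\|_{L^M(\rn,\gamma_n)}\leq C_1+C_2+1$, i.e.\ (\ref{hn11}) with $C:=C_1+C_2+1$. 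No step is substantively hard; the only points worth checking in passing are the regularity and compact support of $u_\theta$ (immediate from $u\in C^1_0(\rn)$), the radial gradient bound $|u_\theta'|\leq|\nabla u|$, and the measurability justifying Fubini, none of which constitutes a real obstacle.
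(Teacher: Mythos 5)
Your proposal is correct and follows essentially the same route as the paper: passing to polar coordinates, applying Proposition \ref{wklep1} to the radial slice $u_\theta(r)=u(r\theta)$ for each fixed direction, using $|u_\theta'(r)|=|\nabla u(r\theta)\cdot\theta|\leq|\nabla u(r\theta)|$ together with the monotonicity of $M$, and integrating over the sphere; the norm inequality is then obtained by the same normalization trick as in Corollary \ref{norms}. No gaps.
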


\begin{proof}
We start with the proof of (\ref{hn1}).
We can write,
in spherical coordinates,
\begin{eqnarray*}
\int_\rn M(|x|\,|u(x)|)\,\ud\gamma_n(x)&= &
\omega_{n}\int_{S^{n-1}}\int_0^\infty M(|r
u(r,y)|)r^{n-1}\e^{-\frac{r^2}{2}}\,\ud r\,\ud\sigma_{n-1}(y)  ,
\end{eqnarray*}
where $\sigma_{n-1}$ denotes the normalized surface measure on
$S^{n-1}\!\subset\!\rn,$ $\omega_{n}$ is the standard
$(n-1)$-dimensional surface measure of $S^{n-1}$, and
$u(r,y)=u(ry)$ (for $n=1$, we do not have to do anything). For $y$
fixed, the function $v(r)=u(r,y)$ is a $\C^1-$function,  and so we
can apply Proposition \ref{wklep1} for $v(r)= u(r,y).$ We obtain
(for given $y$):
\[
\int_0^\infty M(r|v(r)|)r^{n-1}\e^{-r^{2}/2}\,\ud r \leq
\]
\[
C_1 \int_0^\infty M(|v(r)|)r^{n-1}\e^{-r^{2}/2}\,\ud r
+C_2 \int_0^\infty M(|v'(r)|)r^{n-1}\e^{-r^{2}/2}\,\ud r,
\]
and the constants $C_1$, $C_2$ {\em do not} depend on
$y.$ Note also that $v'(r)=\frac{\partial u}{\partial r}(r,y),$
and so $|v'(r)|\leq |\nabla u(x)|.$ Switching back to Euclidean
coordinates we obtain:
\[\int_\rn M(|x|\,|u(x)|)\,\ud\gamma_n(x)\leq C_1 \int_\rn M(|u(x)|)\,\ud\gamma_n(x)+C_2
\int_\rn M(|\nabla u(x)|)\,\ud\gamma_n(x).\]
 (\ref{hn1}) is proven.

To get (\ref{hn11}), we use a standard Orlicz-space argument similar to that in
the proof of Corollary \ref{norms}: we apply (\ref{hn1}) to the function $\widetilde{u}=\frac{u}{\|u\|_M+\|\nabla u\|_M},$
and then we proceed as before.
\end{proof}

Under more restrictive assumptions on $M,$ we can use Proposition
\ref{alternatywa} instead of Proposition \ref{wklep1}, so that the
constants are better controlled. For example, when  \linebreak $D_M+n-2\geq
{\rm e},$ we get:

\begin{theo}\label{hn2 }
Suppose that $M$ satisfies condition {\bf (M)} with $d_M\geq 2$
and $D_M>\max \{2, {\rm e}+2-n\}.$  Let $u\in C^1_0(\rn).$
Denoting
\begin{eqnarray*}
&& \K^{(n)}=\int_\rn \! M(|x|\,|u(x) |)\,\ud \gamma_{n}(x),\;\;\;
\L^{(n)}=\int_\rn\! M(|u(x)|)\,\ud \gamma_{n}(x)\\
&& \M^{(n)}=\int_\rn\! M(|\nabla u(x)|)\,\ud \gamma_{n}(x),
\end{eqnarray*}
one gets
\begin{equation}\label{wwww}
\K^{(n)} \leq \left(
\frac{1}{2}D_{M}\left(\M^{(n)}\right)^{1/D_{M}}+\sqrt{\frac{1}{4}D_{M}^{2}\left(\M^{(n)}\right)^{2/D_{M}}+
(D_{M}+n-2)\left(\L^{(n)}\right)^{2/D_{M}}}
\right)^{D_{M}}.
\end{equation}

\end{theo}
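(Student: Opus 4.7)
The plan is to carry out, directly in $n$ dimensions, the same argument as in the proof of Proposition \ref{alternatywa}, by performing the radial integration by parts slice-by-slice and then averaging over the unit sphere. Writing $\K^{(n)} = c_n\omega_n\int_{S^{n-1}}\K_y\,\ud\sigma(y)$ with $\K_y=\int_0^\infty M(r|v_y(r)|)\,\ud\mu_n(r)$ and $v_y(r)=u(ry)$, I apply the integration-by-parts step from the proof of Proposition \ref{alternatywa} to $v_y$ for each fixed $y$. This gives
\[
\K_y \leq D_M\int_0^\infty \frac{M(r|v_y|)}{r|v_y|}|v_y'(r)|\,\ud\mu_n(r)+(D_M+n-2)\int_0^\infty \frac{M(r|v_y|)}{(r|v_y|)^2}v_y(r)^2\,\ud\mu_n(r).
\]
Since $|v_y'(r)|=|\nabla u(ry)\cdot y|\leq |\nabla u(ry)|$, multiplying by $c_n\omega_n$ and integrating in $y$ recombines both radial integrals into $n$-dimensional Gaussian integrals, yielding
\[
\K^{(n)} \leq D_M\int_{\R^n}\frac{M(|x||u|)}{|x||u|}|\nabla u(x)|\,\ud\gamma_n(x)+(D_M+n-2)\int_{\R^n}\frac{M(|x||u|)}{(|x||u|)^2}u(x)^2\,\ud\gamma_n(x).
\]

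From this point the proof follows verbatim the scheme of Proposition \ref{alternatywa}. I apply the pointwise Young-type inequalities (\ref{ni1}) with $r=|x||u|$, $s=|\nabla u|$ and (\ref{ni2}) with $r=|x||u|$, $s=|u|$, for parameters $\rho,\lambda\geq 1/d_M$ to be chosen; integrating against $\gamma_n$ yields the $n$-dimensional analogue of (\ref{ni3}):
\[
\K^{(n)} \leq \bigl[D_M(1-D_M^{-1})(\rho D_M)^{-1/(D_M-1)}+(D_M+n-2)(1-2D_M^{-1})(\lambda D_M)^{-2/(D_M-2)}\bigr]\K^{(n)}+D_M\rho\M^{(n)}+2(D_M+n-2)\lambda\L^{(n)}.
\]
If either $\K^{(n)}\leq (D_M/d_M)^{D_M/(D_M-2)}\L^{(n)}$ or $\K^{(n)}\leq (D_M/d_M)^{D_M/(D_M-1)}\M^{(n)}$, then the conclusion is immediate: the hypotheses $d_M\geq 2$ and $D_M+n-2\geq \e$ (equivalent to $D_M>\e+2-n$) together with the elementary estimate $D_M/d_M\leq D_M/2\leq \e^{D_M/2-1}$ force the right-hand side of (\ref{wwww}) to dominate $\K^{(n)}$ in both cases. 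Otherwise the choices $\lambda_0 = D_M^{-1}(\K^{(n)}/\L^{(n)})^{(D_M-2)/D_M}$ and $\rho_0 = D_M^{-1}(\K^{(n)}/\M^{(n)})^{(D_M-1)/D_M}$ both satisfy $\geq 1/d_M$, and substituting them reduces the previous display to
\[
\K^{(n)} \leq D_M(\M^{(n)})^{1/D_M}(\K^{(n)})^{(D_M-1)/D_M}+(D_M+n-2)(\L^{(n)})^{2/D_M}(\K^{(n)})^{(D_M-2)/D_M};
\]
completing the square in the quantity $(\K^{(n)})^{1/D_M}$ yields precisely (\ref{wwww}).

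The main point requiring care is that a naive attempt to deduce (\ref{wwww}) from Proposition \ref{alternatywa} by treating it as a pointwise-in-$y$ estimate and integrating over the sphere fails, because the right-hand side of (\ref{term2}) is neither linear nor obviously concave in $(\L_y,\M_y)$, so Jensen's inequality does not apply cleanly. Redoing the proof directly in $n$ dimensions circumvents this: the nonlinearity is introduced only \emph{after} all spherical averaging has been carried out, by the optimal choice of $\rho_0,\lambda_0$ in terms of the $n$-dimensional quantities themselves. Boundary terms in the IBP are harmless because $u\in\C^1_0(\R^n)$ makes each $v_y$ compactly supported and $M(x)/x\to 0$ as $x\to 0^+$, exactly as in the 1D proof.
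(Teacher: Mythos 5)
Your proof is correct, but it is structured differently from the paper's. The paper disposes of this theorem in one line -- ``the proof is identical with that of Theorem \ref{HN}'' -- i.e.\ it applies Proposition \ref{alternatywa} as a black box along each ray $r\mapsto u(ry)$ and then integrates over $S^{n-1}$. As you rightly observe, that literal reading has a subtle point: unlike (\ref{ww}), the bound (\ref{term2}) is nonlinear in $(\L,\M)$, so integrating it in $y$ requires knowing that the right-hand side, viewed as a function $F(\L,\M)$, is superadditive. This is in fact true -- $F$ is positively homogeneous of degree one and concave, as one can check from the implicit description $D_{M}(\M/\K)^{1/D_{M}}+(D_{M}+n-2)(\L/\K)^{2/D_{M}}=1$ of $\K=F(\L,\M)$ together with concavity of the perspective of a concave function -- so the paper's route can be completed, but the needed concavity is not stated or proved there. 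Your alternative sidesteps the issue cleanly: you re-run the internals of the proof of Proposition \ref{alternatywa} (radial integration by parts ray-by-ray, the bound $|\partial_r u(ry)|\le|\nabla u(ry)|$, then the pointwise inequalities (\ref{ni1})--(\ref{ni2})) and only afterwards optimize over $\rho,\lambda$, so the nonlinearity enters after all spherical averaging. All the individual steps check out: the boundary terms vanish for $u\in C^1_0$, the case analysis guaranteeing $\lambda_0,\rho_0\ge 1/d_M$ carries over verbatim, and the hypothesis $D_M>\e+2-n$ ensures the two degenerate cases are absorbed into (\ref{wwww}) exactly as in the last assertion of Proposition \ref{alternatywa}. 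What your version buys is a self-contained argument with no hidden convexity lemma; what the paper's version buys is brevity, at the cost of an unjustified (though true) Jensen-type step.
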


The proof is identical with that of Theorem \ref{HN}.

\section{The Landau-Kolmogorov inequality for the Gaussian\\
measure}

The Hardy inequalities from Section \ref{endim} can be used for
deriving Landau-Kolmogorov inequalities for Gaussian measures in
$\rn.$

To this end, we will use the following theorem (Theorem 3.3 of
\cite{AKKPPbullpan}), applied with $P=Q=M.$

\begin{theo}[\cite{AKKPPbullpan}]\label{theoB} Let
$\Omega\subset\rn$be an open domain.  Suppose that $M$ is a
differentiable $N-$function satisfying the $\Delta_2-$condition
and such that $M(r)/r^2$ is non-decreasing. Let ${\rm d}\mu(x)=
{\rm e}^{-\varphi(x)}{\rm d}x$ be a Radon measure on $\Omega$ such
that $\varphi\in W_{loc}^{1,\infty}(\Omega)$. If for every $u\in
C^\infty_0(\Omega)$ the following
 Hardy-type
inequality holds true:
\begin{equation}\label{assumB1}
\int_\Omega M(|\nabla\varphi|\,|u|)\,\ud\mu\leq K_1\int_\Omega
M(|\nabla u|)\,\ud\mu +K_2\int_\Omega M(|u|)\,\ud\mu,
\end{equation}
then we have:
\begin{description}
\item[1)] there exist positive constants $C_1,C_2$ such that
for  any $\theta\in (0,1]$ and  any $u\in C_0^\infty(\Omega)$
 \begin{equation}\label{statB1} \int_\Omega M(|\nabla u|)\,\ud\mu\leq
C_1\int_\Omega M(\theta |\nabla^{(2)}u|)\,\ud\mu +C_2 \int_\Omega
M(|u|/\theta)\,\ud\mu;
\end{equation}
\item[2)] there exist positive constants $\tilde{C}_1,\tilde{C}_2$ such that
for any $u\in C_0^\infty(\Omega)$
 \begin{equation}\label{statB2} \|\nabla
u\|_{L^M(\Omega,\mu)}\leq\tilde C_1
\sqrt{\|\nabla^{(2)}u\|_{L^M(\Omega,\mu)} \|u\|_{L^M(\Omega,\mu)}}
+\tilde C_2 \|u\|_{L^M(\Omega,\mu)}.
\end{equation}
\end{description}
\end{theo}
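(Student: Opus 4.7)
The plan is to prove the modular inequality (\ref{statB1}) by integrating by parts in $\int_{\Omega} M(|\nabla u|)\,\ud\mu$, and then to deduce the norm inequality (\ref{statB2}) by a standard Luxemburg-normalisation argument.

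For (\ref{statB1}), at points where $\nabla u\neq 0$ I rewrite $M(|\nabla u|)=\frac{M(|\nabla u|)}{|\nabla u|^{2}}\,\nabla u\cdot\nabla u$ and introduce the vector field $\vec{G}=\frac{M(|\nabla u|)}{|\nabla u|^{2}}\,\nabla u$, extended by $0$ where $\nabla u$ vanishes; its continuity follows from the hypothesis that $M(r)/r^{2}$ is non-decreasing and has a finite limit at $0$. Integrating by parts against $\e^{-\varphi}$ produces
\[
\int_{\Omega} M(|\nabla u|)\,\ud\mu=-\int_{\Omega} u\,(\nabla\!\cdot\!\vec{G})\,\ud\mu+\int_{\Omega} u\,\vec{G}\cdot\nabla\varphi\,\ud\mu.
\]
The $\Delta_{2}$-bound $M'(s)\le D_{M}M(s)/s$ combined with the chain rule gives $|\nabla\!\cdot\!\vec{G}|\le C\,\tfrac{M(|\nabla u|)}{|\nabla u|^{2}}|\nabla^{(2)}u|$, while $|\vec{G}|=M(|\nabla u|)/|\nabla u|$.

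Both integrals are dispatched by two Young-type inequalities that follow from $M(r)/r^{2}$ non-decreasing and $\Delta_{2}$ (the first being Lemma \ref{wklep2} applied to $M(a)\cdot(b/a)$, the second a two-case argument splitting on whether $s\le a$ or $s>a$):
\[
\tfrac{M(a)}{a}\,b\le \eps M(a)+c_{\eps}M(b),\qquad \tfrac{M(a)}{a^{2}}\,s^{2}\le \eps M(a)+c_{\eps}M(s).
\]
Combining the second inequality with $|u|\,|\nabla^{(2)}u|\le\tfrac{1}{2}(\theta^{2}|\nabla^{(2)}u|^{2}+|u|^{2}/\theta^{2})$ dominates the $\nabla\!\cdot\!\vec{G}$ contribution by $\eps\int M(|\nabla u|)\,\ud\mu+c_{\eps}\int M(\theta|\nabla^{(2)}u|)\,\ud\mu+c_{\eps}\int M(|u|/\theta)\,\ud\mu$. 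The first Young-type estimate bounds $\tfrac{M(|\nabla u|)}{|\nabla u|}|u||\nabla\varphi|\le\eps M(|\nabla u|)+c_{\eps}M(|u||\nabla\varphi|)$, and the hypothesis (\ref{assumB1}) then converts $\int M(|u||\nabla\varphi|)\,\ud\mu$ into a combination of $\int M(|\nabla u|)\,\ud\mu$ and $\int M(|u|)\,\ud\mu\le \int M(|u|/\theta)\,\ud\mu$ (since $\theta\in(0,1]$). Choosing $\eps$ small enough to absorb all multiples of $\int M(|\nabla u|)\,\ud\mu$ onto the left (which is legitimate because this integral is finite for $u\in C_{0}^{\infty}(\Omega)$) produces (\ref{statB1}).

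For (\ref{statB2}), write $A=\|u\|_{L^{M}(\mu)}$ and $B=\|\nabla^{(2)}u\|_{L^{M}(\mu)}$, assume $A>0$, and apply (\ref{statB1}) to $u/K$ with the balancing choice $\theta=\min(1,\sqrt{A/B})$ and $K=\tilde{C}_{1}\sqrt{AB}+\tilde{C}_{2}A$, where $\tilde{C}_{1},\tilde{C}_{2}$ are determined by $C_{1},C_{2}$ from (\ref{statB1}). This choice of $\theta$ ensures $\theta/K\le 1/B$ and $1/(\theta K)\le 1/A$, so convexity of $M$ (which yields $M(\lambda y)\le \lambda M(y)$ for $\lambda\in[0,1]$) combined with (\ref{norm1}) forces $\int M(|\nabla u|/K)\,\ud\mu\le 1$, which is exactly $\|\nabla u\|_{L^{M}(\mu)}\le K$. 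The main obstacle is technical: the field $\vec{G}$ is only as regular as $\nabla u$ and is singular at $\{\nabla u=0\}$, so the integration by parts must be justified by a regularisation (e.g.\ replace $|\nabla u|^{2}$ by $|\nabla u|^{2}+\delta$ and let $\delta\to 0^{+}$), in the same spirit as the limiting arguments used in the proof of Proposition \ref{alternatywa}; keeping track of one-sided derivatives of $M$ in the bound on $\nabla\!\cdot\!\vec{G}$ requires a limsup version of the chain rule identical to the one used there.
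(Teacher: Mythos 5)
You should first be aware that the paper does not prove this statement at all: Theorem \ref{theoB} is quoted verbatim as Theorem 3.3 of \cite{AKKPPbullpan}, so there is no in-paper argument to compare yours against. Judged on its own merits, your proof of part 1) has one genuine gap, and it is fatal as written: the absorption step at the end is circular. You bound the $\nabla\varphi$-term by
\[
\int_\Omega\frac{M(|\nabla u|)}{|\nabla u|}\,|u|\,|\nabla\varphi|\,\ud\mu\;\le\;\eps\int_\Omega M(|\nabla u|)\,\ud\mu+c_\eps\int_\Omega M(|u|\,|\nabla\varphi|)\,\ud\mu
\]
and then feed (\ref{assumB1}) into the last integral, which returns $c_\eps K_1\int_\Omega M(|\nabla u|)\,\ud\mu$. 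The total coefficient of $\int_\Omega M(|\nabla u|)\,\ud\mu$ on the right-hand side is therefore at least $\eps+c_\eps K_1$. But any inequality $\frac{M(a)}{a}\,b\le\eps M(a)+c_\eps M(b)$ valid for all $a,b\ge0$ forces $\eps+c_\eps\ge1$ (take $b=a$), hence $c_\eps\ge1-\eps$ and $\eps+c_\eps K_1\ge K_1+\eps(1-K_1)\ge1$ whenever $K_1\ge1$. So ``choosing $\eps$ small enough'' cannot bring the coefficient below $1$: $c_\eps$ blows up as $\eps\to0^+$ while $K_1$ is fixed, and the subtraction you propose yields a vacuous inequality unless $K_1<1$. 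That restriction is not in the hypothesis, and it certainly fails in the paper's application, where the Hardy constants produced by Proposition \ref{wklep1} and Theorem \ref{HN} are large (e.g.\ of order $2^{D_M-1}D_M^{D_M}$). Finiteness of $\int_\Omega M(|\nabla u|)\,\ud\mu$ is beside the point; the obstruction is the size of the coefficient, not the legitimacy of subtracting a finite quantity.

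The missing idea is to arrange matters so that, after the Hardy hypothesis is invoked, $\int_\Omega M(|\nabla u|)\,\ud\mu$ reappears either with a sublinear power or multiplied by a factor one is free to take small. The $L^2$ model case shows the mechanism: $\int|\nabla u|^2\,\ud\gamma_n=\int u\,Lu\,\ud\gamma_n\le\|u\|_2\bigl(\|\Delta u\|_2+\|\,|x|\,|\nabla u|\,\|_2\bigr)$, one applies Hardy to the components of $\nabla u$ rather than to $u$, and the dangerous term returns as $C\|u\|_2\sqrt{I}$ with $I=\|\nabla u\|_2^2$; one then solves a quadratic inequality in $\sqrt{I}$, exactly as the quadratic in $\K^{1/D_M}$ is solved in the proof of Proposition \ref{alternatywa} — it is the sublinear power of $I$ that makes absorption immune to the size of the constants. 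In the Orlicz setting the analogous move is to pair $|u|/\theta$ against $\theta\,\frac{M(|\nabla u|)}{|\nabla u|}|\nabla\varphi|$ via the Orlicz--Young inequality $st\le M(s)+M^{*}(t)$, using $M^{*}\bigl(\frac{M(a)}{a}c\bigr)\le M(ac)$ for $c\ge1$ (a consequence of $M(r)/r^2$ non-decreasing) to convert the term into $\theta\int_\Omega M(|\nabla\varphi|\,|\nabla u|)\,\ud\mu$, to which (\ref{assumB1}) is applied with $\partial_i u$ in place of $u$; the prefactor $\theta$ then makes the returning multiple of $\int_\Omega M(|\nabla u|)\,\ud\mu$ controllably small, and the case of general $\theta\in(0,1]$ follows by the homogeneity of the expressions $M(\theta|\nabla^{(2)}u|)$ and $M(|u|/\theta)$ under condition (\ref{gw}). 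Your remaining steps (the formula for $\vec G$, the bound on $\nabla\cdot\vec G$, the two Young-type lemmas, and the deduction of the norm inequality (\ref{statB2}) from the modular one by the balancing choice $\theta=\min(1,\sqrt{A/B})$) are sound, but without some device of the above kind the proof of (\ref{statB1}) does not close.
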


We apply this theorem to $\Omega=\rn $ and  ${\rm d}\mu(x)= {\rm
e}^{-|x|^2/2}{\rm d}x.$ In this case $|\nabla \varphi (x)|= |x|,$
and the validity of (\ref{assumB1}) is assured by Proposition
\ref{wklep1} (or Proposition \ref{alternatywa}, provided we assume
{\bf (M)}). Choosing $\theta=1$  we obtain the following:

\begin{cor}\label{forgausssian}
Suppose $M$ is a differentiable $N-$function satisfying the
$\Delta_2-$condition and such that $M(r)/r^2$ is non-decreasing.
Let $\ud \gamma_n(x)= {\rm e}^{-|x|^2/2} \ud x.$ Then there exist
positive constants $C_1,C_2$ such that for any $u\in
C_0^\infty(\rn)$ one has
\begin{equation}\label{statB1gauss} \int_\rn M(|\nabla u|)\,\ud\gamma_n\leq
C_1\int_\rn M( |\nabla^{(2)}u|)\,\ud\gamma_n\ +C_2 \int_\rn
M(|u|)\,\ud\gamma_n;
\end{equation}
and  positive constants $\tilde{C}_1,\tilde{C}_2$ such that for
any $u\in C_0^\infty(\Omega)$
 \begin{equation}\label{statB2gauss} \|\nabla
u\|_{L^M(\rn,\gamma_n)}\leq\tilde C_1
\sqrt{\|\nabla^{(2)}u\|_{L^M(\rn,\gamma_n)}
\|u\|_{L^M(\rn,\gamma_n)}} +\tilde C_2 \|u\|_{L^M(\rn,\gamma_n)}.
\end{equation}
\end{cor}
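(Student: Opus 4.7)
The plan is to derive this corollary as a direct specialization of Theorem \ref{theoB} to the Gaussian setting $\Omega=\rn$, $\ud\mu=\ud\gamma_n={\rm e}^{-|x|^2/2}\ud x$, where $\varphi(x)=|x|^2/2$ and hence $|\nabla\varphi(x)|=|x|$. With this identification, conclusion 1) of Theorem \ref{theoB} (taking $\theta=1$) yields (\ref{statB1gauss}), and conclusion 2) gives (\ref{statB2gauss}) verbatim. So the entire task reduces to verifying the hypotheses of Theorem \ref{theoB}.

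First I would check the structural assumptions on $M$: the corollary postulates that $M$ is a differentiable $N$-function satisfying $\Delta_{2}$ with $r\mapsto M(r)/r^{2}$ non-decreasing, which is exactly what Theorem \ref{theoB} demands. The regularity condition $\varphi\in W^{1,\infty}_{loc}(\rn)$ holds trivially since $\varphi\in C^{\infty}(\rn)$.

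The one substantive step is the Hardy-type hypothesis (\ref{assumB1}). With $|\nabla\varphi(x)|=|x|$ it reads
\[
\int_{\rn} M(|x|\,|u(x)|)\,\ud\gamma_n(x)\leq K_{1}\int_{\rn} M(|\nabla u|)\,\ud\gamma_n(x)+K_{2}\int_{\rn} M(|u|)\,\ud\gamma_n(x),
\]
which is exactly inequality (\ref{hn1}) of Theorem \ref{HN} applied to $u\in C_{0}^{\infty}(\rn)$. To invoke Theorem \ref{HN} I need $M$ increasing, convex, with $M(0)=0$ and doubling in the one-sided sense $M(ax)\leq a^{D_M}M(x)$ for $a\geq 1$; all of these are automatic for a differentiable $N$-function satisfying $\Delta_{2}$.

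The main (and only) obstacle is thus the bookkeeping that the hypotheses of the corollary genuinely imply both the premise of Theorem \ref{theoB} and the premise of Theorem \ref{HN}; this is immediate. Once (\ref{assumB1}) is in hand, Theorem \ref{theoB}(1) with $\theta=1$ produces (\ref{statB1gauss}) and Theorem \ref{theoB}(2) produces (\ref{statB2gauss}), completing the proof.
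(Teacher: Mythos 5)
Your proposal is correct and follows exactly the paper's own route: specialize Theorem \ref{theoB} to $\varphi(x)=|x|^2/2$, supply the Hardy-type hypothesis (\ref{assumB1}) from the Gaussian Hardy inequality (Theorem \ref{HN}, i.e.\ the $n$-dimensional consequence of Proposition \ref{wklep1}), and take $\theta=1$ in conclusion 1). If anything, your citation of Theorem \ref{HN} for the $n$-dimensional inequality is slightly more precise than the paper's reference to the one-dimensional Proposition \ref{wklep1}.
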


By usual density arguments, smoothness conditions on $u$ can be
relaxed.

\end{document}